\newtheorem{thm}{Theorem}
\newtheorem{prop}{Proposition}[section]
\newtheorem{lem}[prop]{Lemma}
\theoremstyle{definition}
\newtheorem{notations}[prop]{Notations}
\numberwithin{equation}{section}
\newcommand\N{\mathbb{N}}
\newcommand\Z{\mathbb{Z}}
\newcommand\R{\mathbb{R}}
\newcommand\C{\mathbb{C}}
\newcommand\Log{{\mathrm Log}}
\newcommand\e{\varepsilon}
\newcommand\undemi{\frac{1}{2}}
\newcommand\Ad{A_\delta}
\newcommand\un{u_{ \e_n,\delta_n}}
\begin{document}

\title[Periodic unfolding and the G.-L. equation] {Periodic unfolding and Homogenization \\  for the Ginzburg-Landau Equation \\
{\rm Preliminary draft}
}%
\author{Myrto Sauvageot}%
\address{}%
\email{sauvageo@ann.jussieu.fr}%

\thanks{}
\subjclass{}%
\keywords{homogenization, periodic unfolding, Ginzburg-Landau equation}%

\date{March 30, 2009}%
%\dedicatory{}%
%\commby{}%
% ----------------------------------------------------------------

\begin{abstract} We investigate, on a bounded domain $\Omega$ of $\R^2$ with fixed $S^1$-valued boundary condition $g$ of degree $d>0$, the asymptotic behaviour of solutions $u_{\varepsilon,\delta}$ of a class of Ginzburg-Landau equations driven by two parameter\,: the usual Ginzburg-Landau parameter, denoted $\varepsilon$, and the scale parameter $\delta$ of a geometry provided by a field of $2\times 2$ positive definite matrices $x\to A(\frac{x}{\delta})$. The field $\R^2\ni x\to A(x)$ is of class $W^{2,\infty}$ and periodic. We show, for a suitable choice of the $\varepsilon$'s depending on $\delta$, the existence of a limit configuration $u_\infty\in H^1_g(\Omega,S^1)$, which, out of a finite set of singular points,  is a weak solution of the equation of $S^1$-valued harmonic functions for the geometry related to the usual homogenized matrix $A^0$.

\end{abstract}

\maketitle

%\medskip\centerline{(Preliminary draft)} \bigskip

% ----------------------------------------------------------------
\section{Introduction and statement of the  results.}

\subsection{\label{11} Periodic homogenization}\-

\smallskip Homogenization can be considered as the mathematical theory of the macroscopic behaviour  of composite material.

In the case of periodic homogenization, one supposes that heterogeneities 	are distributed with a periodicity of length $\delta$, small with respect of the size of the medium, and that the geometry of their distributions is described by a field $x\to \displaystyle A\big(\frac{x}{\delta}\big)$     of  $n\times n$ positive definite matrices with real entries. In this setting, the unscaled field $\R^n \ni y\to A(y)$ is periodic, with a period associated with a cell $Y\subset \R^n$. Usually, one studies the simpler case $Y=[0,1[^n$, which corresponds to a period equal to $1$ in the $n$ directions of the space. The purpose is to study the asymptotic geometry as the scaling parameter $\delta$ tends to $0$, which provides the properties of the system for infinitely small $\delta$.

\smallskip Consider for instance the paradigmatic problem for periodic homogenization\,: describe the limit behaviour of a family $U^\delta$ of variational solutions for the following system of equations\,:
\begin{equation}
-div\Big( A\big(\frac{x}{\delta}\big)\nabla U^\delta(x)\Big)= f \text{ on }\Omega\,,\; U^\delta=0 \text{ on }\partial \Omega.
\end{equation}
It is now a classical result that, under suitable assumptions on the sectrum of the matrices $A(y)$, a weak $H^1$-limit $U^0$ of the family $U^\delta$ will be a solution of the homogenized system
\begin{equation}
-div\Big( A^0\nabla U^\delta(x)\Big)= f \text{ on }\Omega\,,\; U^\delta=0 \text{ on }\partial \Omega\,,
\end{equation}
where $A^0$, the so-called {\it homogenized matrix of the field $A(\cdot)$}, is an elliptic matrix with constant coefficients (i.e. describing an homogeneous material), which can be explicitely described from the field $A(\cdot)$. One can refer for instance to [SP], [Ba] or [BLP].

The construction of the matrix $A^0$ is recalled in subsection \ref{71}.

\medskip
\subsection{The periodic unfolding method.}\-

\medskip  The two-scale convergence method introduced by Nguetseng [ Ng], and developed by G. Allaire ([Al1 ], [ Al2],) allows to solve more general homogenization problem. Recently,  D. cioranescu, A. Damlamian and G. Griso  have developed a rather quick way to obtain two-scale convergence results, namely the {\it periodic unfolding method}. It is based on a simply defined unfolding operator $T_\delta$, depending on the scaling parameter $\delta$, which transforms a function $f(x)$ on a domain $\Omega$ into a function $T_\delta f(x,y)$ on the cartesian product $\Omega\times Y$. Up to some attention to be paid when one gets close to the boundary of $\Omega$, it  is an isometric operator for any $L^p$-norm and it behaves rather well with respect to differential operators. Periodic unfolding is thoroughly explained in the survey paper [CDG2].

\smallskip Since periodic unfolding first appeared in [CDG1 ], the method has been applied to many linear and nonlinear situations (cf. for instance  [ ], [ ], ...)  Our purpose in this work is to apply the method to the homogenization of the Ginzburg-Landau equation of [BBH], which is some kind of archetypal nonlinear equation and can be considered as a simplified approach to the Ginzburg-Landau model of superconductivity.

\medskip
\subsection{ Homogenizing the Ginzburg-Landau equation.}\-

The problem raised by [BBH] is the asymptotic behaviour, as $\e\to 0$, of the minimizers $u_\e$ of a Ginzburg-Landau energy functional
\begin{equation}
E_\e(u)=\frac{1}{2}\int_\Omega |\nabla u|^2 +\frac{1}{4\e^2}\int_\Omega \big(1-|u|^2\big)^2\,,\; u\in H^1_g(\Omega,\C)\,,
\end{equation}
with $\Omega$ a bounded subset of $\R^2$, ang $g\,:\,\partial \Omega \to S^1$ a modulus $1$ fixed boundary condition of degree $d$. We suppose $d>0$.

Roughly speaking, they prove the existence of a sequence $\e_n$ and of a finite subset $\{a_1,\cdots,a_n\}$ of $\Omega$, such that the locally $H^1$-limit
 $u_*=\lim_n u_{\e_n}$ exists in $H^1_{loc}(\Omega,\C)$. $u_*$ is a modulus $1$ function, and a solution of the equation of $S^1$-valued harmonic functions
 \begin{equation}\label{harm}
-div\,\nabla u_* =u_*\,|\nabla u_*|^2\,.
\end{equation}

\smallskip The same result can be obtained, replacing the functional $E_\e$ by the energy functional $E_\e$ by the functional
\begin{equation}
\frac{1}{2}\int_\Omega\,a(x)\, |\nabla u|^2 +\frac{1}{4\e^2}\int_\Omega \big(1-|u|^2\big)^2\,,
\end{equation}
with $a(x)$ taking values in $\R^*_+$ bounded above by $M$ and below by $m$, $0<m<M$. (For this result, cf. [Be]).

As a byproduct of the present work, i.e. adapting to a simpler context the methods of the present paper, one can show that the result is still valid for minimizers of a energy functional of the form
\begin{equation}\label{GLA}
\frac{1}{2}\int_\Omega\,\nabla u(x)\,A(x)\,\nabla u(x) +\frac{1}{4\e^2}\int_\Omega \big(1-|u|^2\big)^2\,,
\end{equation}
with $\Omega\ni x \to A(x)$ a field of positive definite $2\times 2$ matrices, with $spectrum\,A(x)\subset [m,M]$ as above.

\medskip Related to homogenization is the work of L. Berlyand and P. Mironescu [BM] on the classical Ginzburg-Landau energy (\ref{}) for perforated domains with periodic holes of diameter $\delta$.

\medskip In this paper, we shall consider the homogenized version of (\ref{GLA}), i.e. the perturbated Ginzburg-Landau functional
\begin{equation}
\frac{1}{2}\int_\Omega\,\nabla u(x)\,A\big(\frac{x}{\delta}\big)\,\nabla u(x) +\frac{1}{4\e^2}\int_\Omega \big(1-|u|^2\big)^2\,,
\end{equation}
where $\delta$ is an homogenization parameter. We prove that, adjusting the parameter $\e$ according to $\delta$, a similar result can be obtained, where the notion of $S^1$-valued harmonic function, as in (\ref{harm}), will refer to the geometry provided  by the homogenized matrix $A^0$\,.

\bigskip

\subsection{\label{data}Data.}\-

\bigskip
Throughout this work, we  consider the following data\,:
\par\noindent $\quad {\mathbf \cdot}$ A bounded, connected  open domain $\Omega$ in $R^2$, with class $C^1$-boundary $\partial \Omega$\,;
\par\noindent $\quad {\mathbf \cdot}$ A modulus $1$ boundary condition $g\in C^1(\partial\Omega), S^1)$\,, of degree $d>0$\, (note that the easier case $d=0$ is solved in [ Me]\,)\,;
\par\noindent $\quad {\mathbf \cdot}$ A field $\R^2\ni x\to A(x)\in M^2(\R)$ of $2\times 2$ symmetric  definite positive square matrices which is of class $W^{2,\infty}$, with $x\to A(x)^{-1}$ also bounded, and $\Z^2$-periodic. In other words\,:
\begin{itemize}
\item[] $\exists\,0<m<M$\,, $spectrum(A(x))\subset [m,M]$, $\forall\,x\in\R^2$, i.e.
\begin{equation}\label{mM}
m ||\xi||^2 \leq \xi \cdot A(x)\xi\leq M||\xi||^2\,,\;\forall \xi,x\in\R^2\,;
\end{equation}
\item[] $A(x+z)=A(x)$\,, $\forall x\in\R^2\,,\;z\in\Z^2$\,;
\item[] $A(\cdot)\in W^{2,\infty}(\R^2,M_2(\R))$\,;
\end{itemize}

\par\noindent $\quad {\mathbf \cdot}$ For any $\delta>0$, the field of matrices $\displaystyle \Ad\,:\;\Ad(x)=A\big(\frac{x}{\delta}\big)$, $x\in\R^2$\,;
\par\noindent $\quad {\mathbf \cdot}$ For any $\e,\delta>0$, a minimizer $u_{\e,\delta}$ of the perturbated Ginzburg-Landau energy $E_{\e,\delta}$ on $\Omega$, with boundary condition $g$\,:
\begin{equation}\label{defE}
E_{\e,\delta}(u)=\undemi\int_\Omega \nabla u\cdot A_\delta \nabla u +\frac{1}{4\e^2}\int_\Omega \big(1-|u|^2\big)^2\,,\;u\in H^1_g(\Omega,\C)\,.
\end{equation}

\smallskip Note that $u_{\e,\delta}$ is a solution of the Euler-Lagrange equation of $E_{\e,\delta}$\,:
\begin{equation}\label{Euler}
-div\big(A_\delta\,\nabla u_{\e,\delta}\big)=\frac{1}{\e^2}\,u_{\e,\delta}\big(1-|u_{\e,\delta}|^2\big)\,.
\end{equation}

\subsection{ Statement of the results.}\-

Our final result can be summarized as a theorem\,:

\begin{thm}\label{thm1} Let $(\delta_n)$ be  a sequence in $\R_+^*$ decreasing to $0$. Then, replacing it by a subsequence (stiil denoted $(\delta_n)$\,), one can find a sequence $(\e_n)$ decreasing to $0$ in $\R_+^*$, and a finite subset $\{a_1,\cdots,a_N\}$ of $\Omega$ such that
\,:

1/ $u_\infty=\lim_n u_{\e_n,\delta_n}$ exists in $H^1_{loc}(\Omega\,\backslash\{a_1,\cdots,a_N\},\C)$\,, as a weak locally $H^1$-limit.

2/ $u_\infty$ takes values in $S^1$, coincides with $g$ on the boundary $\partial \Omega$, and is a weak solution, in $\Omega\,\backslash\{a_1,\cdots,a_d\}$, of the equation of $S^1$-valued $A^0$-harmonic functions
\begin{equation}\label{A0harm}
-div(A^0\nabla u_\infty)=u_\infty\,\big(\nabla u_\infty \cdot A^0\nabla u_\infty\big)\,,
\end{equation}
where $A^0$ is the homogenized $2\times 2$-matrix (positive definite, with constant coefficients) associated with the field $A(.)$ {\rm (Cf. subsections \ref{11} and \ref{71}).}
\end{thm}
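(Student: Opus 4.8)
The plan is to combine the classical Bethuel–Brezis–Hélein (BBH) analysis of the Ginzburg–Landau equation with the periodic unfolding machinery, treating the two limits $\e\to 0$ and $\delta\to 0$ in a coupled way. Concretely, I would first fix $\delta=\delta_n$ and let $\e\to 0$ in the functional $E_{\e,\delta_n}$: for each fixed $n$, the field $\Ad$ is a $W^{2,\infty}$ matrix field with spectrum in $[m,M]$, so the results quoted from [Be] apply and give, after extracting a subsequence $\e\to 0$, a singular set $\Sigma_n=\{a_1^n,\dots,a_{N_n}^n\}$ and a limit $v_n\in H^1_g(\Omega,S^1)$ which is a weak $A_{\delta_n}$-harmonic $S^1$-valued map away from $\Sigma_n$, together with the fundamental BBH-type energy bound $E_{\e,\delta_n}(u_{\e,\delta_n})\le \pi d\,|\log\e|+C(\delta_n)$. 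The first real task is to make this bound, and the number $N_n$ of vortices, \emph{uniform in }$n$: this is where one has to be careful, since $C(\delta_n)$ a priori blows up. I would exploit the fact that $\Omega\ni x\mapsto A(x/\delta)$ oscillates but stays between $mI$ and $MI$, so the comparison energies $\pi m\,|\log\e|$ and $\pi M\,|\log\e|$ sandwich everything and force $N_n$ to be bounded (in fact eventually equal to $d$, by the degree-$d$ lower bound). Then a diagonal argument over $n$ selects $\e_n\downarrow 0$ so slowly that $u_{\e_n,\delta_n}$ still enjoys the BBH lower energy bound and converges, locally in $H^1$ away from a limiting singular set $\{a_1,\dots,a_N\}$ (obtained as the limit of the $\Sigma_n$, possibly after relabelling and discarding vortices that escape to $\partial\Omega$ or collide), to some $u_\infty\in H^1_g(\Omega,S^1)$.

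The second main step is to identify the equation satisfied by $u_\infty$, and this is where periodic unfolding enters. Away from the singular set, on a fixed compact $K\Subset\Omega\setminus\{a_1,\dots,a_N\}$, the maps $u_{\e_n,\delta_n}$ satisfy the Euler–Lagrange equation (\ref{Euler}) with a right-hand side $\e_n^{-2}u(1-|u|^2)$ which, by the BBH clearing-out / $\eta$-ellipticity estimates, converges to zero in an appropriate sense (one shows $1-|u_{\e_n,\delta_n}|^2=O(\e_n^2)$ pointwise on $K$, or at least that the RHS is bounded in $L^1_{loc}$ and concentrates only at the $a_j$). Hence on $K$ the maps are, up to a small error, solutions of $-\mathrm{div}(A_{\delta_n}\nabla u)=$ (lower order). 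Applying the unfolding operator $T_{\delta_n}$ and passing to the two-scale limit, exactly as in the paradigmatic linear problem recalled in subsection \ref{11}, the oscillating coefficient $A(x/\delta_n)$ is replaced in the limit by the homogenized matrix $A^0$; the corrector terms involving $\nabla_y$ of the two-scale limit are handled by the div–curl / compensated-compactness structure inherent in the unfolding method. One must check that the quadratic nonlinearity $u\,(\nabla u\cdot A_\delta\nabla u)$ passes to the limit correctly: this is the delicate point, because it is not weakly continuous in general. Here I would use that on $K$ the convergence $u_{\e_n,\delta_n}\to u_\infty$ is \emph{strong} in $H^1_{loc}$ — which follows from the BBH energy quantization together with the upper bound matching the lower bound on $K$ — so that $\nabla u_{\e_n,\delta_n}\to\nabla u_\infty$ strongly in $L^2_{loc}(K)$, and then $T_{\delta_n}(\nabla u_{\e_n,\delta_n})$ converges strongly two-scale, its $y$-dependence being carried entirely by the corrector $I+\nabla_y\chi(y)$ acting on $\nabla u_\infty(x)$; averaging the quadratic form over the cell $Y$ then produces precisely $\nabla u_\infty\cdot A^0\nabla u_\infty$.

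The remaining steps are comparatively routine: the boundary condition $u_\infty=g$ on $\partial\Omega$ is preserved because the energy bound controls the trace and $\delta$ shrinks faster than any macroscopic scale, so no vortex can sit on $\partial\Omega$ (as $d>0$ and $g$ is $S^1$-valued of fixed degree); that $|u_\infty|=1$ a.e. follows from $\frac{1}{4\e_n^2}\int_\Omega(1-|u_{\e_n,\delta_n}|^2)^2\le C|\log\e_n|$ forcing $\int_\Omega(1-|u_\infty|^2)^2=0$; and the weak-solution property of (\ref{A0harm}) on $\Omega\setminus\{a_1,\dots,a_N\}$ is obtained by testing the unfolded equation against $\varphi\in C_c^\infty(\Omega\setminus\{a_1,\dots,a_N\},\R^2)$ and passing to the limit as above. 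I expect the \textbf{main obstacle} to be the coupling of the two parameters: one needs $\e_n\to 0$ fast enough that the Ginzburg–Landau vortex structure stabilizes (so BBH estimates with constants uniform in $n$ hold) yet slow enough, relative to $\delta_n$, that the homogenization two-scale limit is not destroyed by the singular right-hand side — in other words, quantifying the rate at which $\e_n$ must go to zero in terms of $\delta_n$, and proving that the extracted diagonal subsequence simultaneously achieves both the BBH convergence and the periodic-unfolding convergence. A secondary difficulty is ruling out that the homogenization process creates \emph{new} singularities or shifts the vortices as $\delta_n\to0$; this should follow from the uniform energy bounds localizing the defect measure, but it requires care near the $a_j$.
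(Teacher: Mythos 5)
Your overall architecture (BBH-style vortex analysis to locate singularities, uniform local $H^1$ bounds away from them, periodic unfolding to produce $A^0$) matches the paper's, but there are two genuine gaps. The decisive one is in your passage to the limit in the nonlinear term. You assert that $\nabla u_{\e_n,\delta_n}\to\nabla u_\infty$ strongly in $L^2_{loc}$ away from the singularities; in a homogenization problem this is false in general, and indeed it contradicts the corrector structure you invoke in the very next clause: if the gradients converged strongly in $L^2$ to $\nabla u_\infty$, the two-scale limit of $T_{\delta_n}(\nabla u_n)$ would have no $y$-dependence, whereas it is $\nabla u_\infty(x)+\nabla_y\widehat u(x,y)$ with $\widehat u$ nontrivial unless $A$ is constant. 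Upgrading to strong two-scale convergence (a corrector theorem) for this nonlinear, singularly perturbed equation is a substantial task you do not carry out. The paper avoids the issue entirely by a structural trick you are missing: since the right-hand side of the Euler--Lagrange equation is parallel to $u_n$, one writes $-div\big(A_{\delta_n}\nabla u_n\big)\wedge u_n=0$, unfolds, and passes to the limit using only $T_{\delta_n}(u_n)\to u_\infty$ strongly in $L^2$ and $T_{\delta_n}(\nabla u_n)\rightharpoonup \nabla u_\infty+\nabla_y\widehat u$ weakly (a strong--weak product); the cell problem then identifies $\widehat u\wedge u_\infty$ in terms of $\widehat\chi$ and yields $A^0$. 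This is what makes weak $H^1_{loc}$ convergence (which is all that is actually proved) sufficient.

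The second gap concerns the uniform estimates. Your plan is to run the full BBH/[Be] analysis at fixed $\delta_n$ and then diagonalize, relying on ``energy quantization'' and on the upper bound matching the lower bound on compacta; but for the oscillating field $A(x/\delta_n)$ the constants in the elliptic estimates degenerate like $\delta_n^{-1}$ and $\delta_n^{-2}$, and the precise $\pi|\log\e|$-type expansion of the minimal annulus energy is not available (the paper deliberately never computes $\mu(B,\alpha,\beta,\kappa)$ beyond the two-sided bound $2m\pi\kappa^2\log(\beta/\alpha)\le\mu\le 2M\pi\kappa^2\log(\beta/\alpha)$). The paper's fix is twofold: (i) a Struwe-type monotonicity argument selecting $\e_n=o(\delta_n^2)$ together with the $O(1)$ bound $\e_n^{-2}\int_\Omega(1-|u_n|^2)^2\le 4M\pi d$, which tames the $\delta_n^{-2}$ terms in the rescaled elliptic estimates and makes the bad-disk construction uniform in $n$; and (ii) a comparison argument in which one builds, for each annulus, an $S^1$-valued competitor of prescribed degree whose energy exceeds the (unknown) minimal energy $\mu$ by an explicit constant and whose boundary values can be matched, so that the $H^1_{loc}$ bound follows without any asymptotic expansion. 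You correctly flag the $\e$--$\delta$ coupling as the main obstacle, but you do not supply the quantitative relation ($\e_n\ll\delta_n^2$) nor a mechanism replacing quantization, so as written this step does not close.
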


\smallskip\noindent [{\bf n.b.} As a matter of fact, one can show that there are exactly $d$ singular points $a_1,\cdots,a_d$, but proving this  requires a much longer and much more complicated proof. We shall skip this point, in order to provide for our Theorem 1. a proof which, in some regards, can be considered as surprisingly simple.

\bigskip

\subsection{Organization of the paper.}\-

\medskip The proof is divided in three steps. The first step consists in identifying the set $\{a_1,\cdots,a_N\}$ of the singular points. This is based on
  the classical approach of [BBH], and also some improvements of [S=Struwe] or [B=Beaulieu] \,, together with the necessary adaptation of those previous methods to our context of a highly perturbated geometry, varying with the parameter $\delta$.

  \smallskip
  The second step is the existence of  a limit configuration. The final result is that the sequences $u_{\e_n,\delta_n}$ constructed in step 1 are bounded in $H^1_{loc}(\Omega\backslash \{a_1,\cdots,a_N\}, \C)$, so that they have weak locally $H^1$ limits $u_\infty$. This is done by a quick study of the energy of some  classes of $H^1$ maps, defined on an annulus $\{x\,/\, \lambda \e_n \leq |x-a_i|\leq R \}$, and taking values in $S^1$. Comparing such energies with the energy of $u_{\e_n,\delta_n}$ on the same annulus will lead to the result.

  The point here is that very little knowledge on the $S^1$-valued maps on such an annulus is actually needed. In particular, no explicit computation of their energy is required. One identifies a specific class of $S^1$-valued maps on the annulus which are of given degree, and then shows the existence of a distinguished representative in this class, with two properties\,: first, its value at the boundary of the annulus is prescribed, and then, its energy differs from the minimal energy in the class by only a bounded quantity, with explicit bound. This result suffices to prove, later, that for any $R>0$, the quantity $\displaystyle \sup_n \int_{\Omega\backslash \big( \cup_i B(a_i,R)\big)
  } \big|\nabla u_{\e_n,\delta_n}\big|^2$ is finite.

\smallskip The third step is the proof of the equation satisfied by a limit configuration $u_\infty$, i.e. Equation (\ref{A0harm}) of Theorem \ref{thm1}. Invoking the periodicity of the field of matrices $A(.)$ (which so far had not been taken in account), one can introduce the now well established method of {\it periodic unfolding} of [CDG1]. This method provides a shortcut to the matrix $A^0$, since this matrix appears naturally here as a byproduct of computations which are quite natural, based on the behaviour of periodic unfolding under weak $H^1$-limits, as established in [CDG2].

\bigskip
\section{First properties of minimizers}

\bigskip
\begin{lem}\label{uborne} $|u_{\e,\delta}|\leq 1$, $\forall \,\e,\delta$\,.
\end{lem}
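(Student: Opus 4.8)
The plan is to use the classical maximum principle argument of Brezis–Bethuel–Hélein, adapted to the variable-coefficient elliptic operator $-\mathrm{div}(A_\delta\nabla\cdot)$. The key point is that $u_{\e,\delta}$ is a minimizer of $E_{\e,\delta}$ in $H^1_g(\Omega,\C)$ and that the boundary datum $g$ satisfies $|g|=1$ on $\partial\Omega$, so truncating $|u_{\e,\delta}|$ at the level $1$ does not change the boundary values and cannot increase the energy.

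\smallskip
First I would introduce the truncation $v=\min\{1,|u_{\e,\delta}|\}\,\dfrac{u_{\e,\delta}}{|u_{\e,\delta}|}$ on the set where $u_{\e,\delta}\neq 0$, and $v=u_{\e,\delta}$ otherwise; equivalently $v=\rho\,u_{\e,\delta}$ with $\rho(x)=\min\{1,1/|u_{\e,\delta}(x)|\}$. One checks $v\in H^1_g(\Omega,\C)$ (it has the same trace $g$ since $|g|=1$), and that $|v|\leq 1$ everywhere. The crucial inequalities are: on the set $\{|u_{\e,\delta}|\leq 1\}$ one has $v=u_{\e,\delta}$, so all integrands agree there; on the set $U=\{|u_{\e,\delta}|>1\}$ one has $|v|=1$, hence the potential term $(1-|v|^2)^2=0\leq (1-|u_{\e,\delta}|^2)^2$, and for the gradient term a direct computation gives $\nabla v\cdot A_\delta\nabla v\leq \nabla u_{\e,\delta}\cdot A_\delta\nabla u_{\e,\delta}$ on $U$. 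The gradient estimate follows from the pointwise identity, valid a.e.\ on $U$ where $|u_{\e,\delta}|>1$ is differentiable, that writing $u_{\e,\delta}=r\theta$ with $r=|u_{\e,\delta}|$, $|\theta|=1$, one has $v=\theta$ there, so $\nabla v=\nabla\theta$, while $\nabla u_{\e,\delta}=(\nabla r)\theta + r\nabla\theta$; since $A_\delta$ is symmetric positive definite and (from $|\theta|^2=1$) $\mathrm{Re}\,\bar\theta\,\partial_j\theta=0$, the cross terms drop and $\nabla u_{\e,\delta}\cdot A_\delta\nabla u_{\e,\delta}=|\nabla r|^2_{A_\delta}+r^2\,\nabla\theta\cdot A_\delta\nabla\theta\geq r^2\,\nabla\theta\cdot A_\delta\nabla\theta\geq \nabla\theta\cdot A_\delta\nabla\theta=\nabla v\cdot A_\delta\nabla v$ using $r>1$ on $U$.

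\smallskip
Putting these together yields $E_{\e,\delta}(v)\leq E_{\e,\delta}(u_{\e,\delta})$, with strict inequality unless $|U|=0$; since $u_{\e,\delta}$ is a minimizer this forces $|U|=0$, i.e.\ $|u_{\e,\delta}|\leq 1$ a.e. An alternative, perhaps cleaner, route is to apply the weak maximum principle directly to the Euler–Lagrange equation (\ref{Euler}): testing the equation for $w=|u_{\e,\delta}|^2$ (which satisfies a differential inequality of the form $-\mathrm{div}(A_\delta\nabla w)\leq \frac{2}{\e^2}w(1-w)$ after using $|\nabla u_{\e,\delta}|^2\geq 0$ and ellipticity), against $(w-1)^+\in H^1_0(\Omega)$ — legitimate since $|g|=1$ so $w=1$ on $\partial\Omega$ — gives $m\int_\Omega|\nabla(w-1)^+|^2\leq \int_\Omega \nabla(w-1)^+\cdot A_\delta\nabla(w-1)^+ \leq \frac{2}{\e^2}\int_{\{w>1\}} w(1-w)(w-1)^+\leq 0$, whence $(w-1)^+\equiv 0$.

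\smallskip
I expect the only mildly delicate point to be the justification that $v\in H^1$ and the chain-rule computation of $\nabla v$ on $U$ (handling the set where $u_{\e,\delta}$ vanishes and where $|u_{\e,\delta}|=1$, which has measure issues only on a null set by the coarea-type arguments); this is standard but must be stated with a little care. The variable matrix $A_\delta$ causes no real trouble here precisely because symmetry and positivity are all that the truncation argument uses, and these are preserved pointwise; the periodicity and the $W^{2,\infty}$ regularity of $A$ play no role in this lemma.
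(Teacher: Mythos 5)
Your primary argument is exactly the paper's proof: the paper defines $f(z)=z$ for $|z|\leq 1$ and $f(z)=z/|z|$ for $|z|\geq 1$ and compares $E_{\e,\delta}(f(u_{\e,\delta}))$ with $E_{\e,\delta}(u_{\e,\delta})$, which is precisely your truncation $v$, and your pointwise computation (cross terms vanish since $\mathrm{Re}\,\bar\theta\,\partial_j\theta=0$, plus the strict decrease of the potential term on $\{|u|>1\}$) correctly fills in the "check" the paper leaves to the reader. The alternative maximum-principle route is also sound, modulo the regularity bootstrap needed to justify $w=|u_{\e,\delta}|^2\in H^1$, but it is not what the paper does.
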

\begin{proof} Check that $\int_\Omega \nabla(f(u))\cdot A_\delta \nabla(f(u)) \leq \int_\Omega \nabla u\cdot A_\delta \nabla u$, where $f(z)=z$ if $|z|\leq 1$, $f(z)=z/|z|$ if $|z|\geq 1$.
\end{proof}

\begin{lem}\label{bornes} There is a constant $C$ such that
\begin{equation}\label{majmu}
m \pi d \log(\frac{1}{\e})-C\leq
E_{\e,\delta}(u_{\e,\delta})\leq M\pi d \log(\frac{1}{\e})+C\,.
\end{equation}
\end{lem}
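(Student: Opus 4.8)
\textbf{Proof plan for Lemma \ref{bornes}.}

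The plan is to establish both bounds by comparing $E_{\e,\delta}(u_{\e,\delta})$ with the corresponding quantities for the classical Ginzburg-Landau functional, where the estimates of [BBH] are available, using only the two-sided spectral bound \eqref{mM}. For the upper bound I would exploit minimality: since $u_{\e,\delta}$ minimizes $E_{\e,\delta}$ in $H^1_g(\Omega,\C)$, it suffices to exhibit one competitor $v\in H^1_g(\Omega,\C)$ with $E_{\e,\delta}(v)\le M\pi d\log(1/\e)+C$. Take $v$ to be the standard test configuration used in [BBH] (Chapter III there), namely a map equal to $g$ near $\partial\Omega$, of modulus $1$ away from $d$ chosen points $b_1,\dots,b_d\in\Omega$, and behaving like $\frac{x-b_j}{|x-b_j|}$ near each $b_j$, cut off at scale $\e$ so that $1-|v|^2$ is controlled. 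For this $v$ one has the classical estimate $\undemi\int_\Omega|\nabla v|^2+\frac1{4\e^2}\int_\Omega(1-|v|^2)^2\le \pi d\log(1/\e)+C_0$. Since $\nabla v\cdot A_\delta\nabla v\le M|\nabla v|^2$ pointwise by \eqref{mM}, we get $E_{\e,\delta}(v)\le M\cdot\undemi\int_\Omega|\nabla v|^2+\frac1{4\e^2}\int_\Omega(1-|v|^2)^2\le M\pi d\log(1/\e)+MC_0$, and then $E_{\e,\delta}(u_{\e,\delta})\le E_{\e,\delta}(v)\le M\pi d\log(1/\e)+C$. One must be slightly careful that the $\e^{-2}$ term is multiplied by $1$, not by $M$, so the factor $M$ should only be attached to the Dirichlet part — this is harmless since $M\ge 1$ can be assumed, or one simply absorbs constants.

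For the lower bound, the point is that the spectral bound gives $E_{\e,\delta}(u_{\e,\delta})\ge m\,\undemi\int_\Omega|\nabla u_{\e,\delta}|^2+\frac1{4\e^2}\int_\Omega(1-|u_{\e,\delta}|^2)^2\ge m\big(\undemi\int_\Omega|\nabla u_{\e,\delta}|^2+\frac1{4\e^2}\int_\Omega(1-|u_{\e,\delta}|^2)^2\big)$ (again assuming $m\le 1$, else trivial). So it is enough to bound the \emph{unweighted} Ginzburg-Landau energy of $u_{\e,\delta}$ from below by $\pi d\log(1/\e)-C$. This is precisely the lower-bound half of the BBH analysis, and the key input is the degree/boundary condition: $u_{\e,\delta}=g$ on $\partial\Omega$ with $\deg g=d>0$, so $u_{\e,\delta}$ cannot be close to an $S^1$-valued $H^1$ map (which would have infinite energy), and the modulus must dip near $d$ essential zeros. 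Concretely one invokes the lower bound for the Ginzburg-Landau energy of \emph{any} $H^1_g(\Omega,\C)$ map (not just minimizers) — e.g. via the vorticity/ball-construction lower bounds, or directly the estimates of [BBH], [S], [B] quoted in the introduction — which yield $\undemi\int_\Omega|\nabla w|^2+\frac1{4\e^2}\int_\Omega(1-|w|^2)^2\ge \pi d\log(1/\e)-C$ for all $w\in H^1_g(\Omega,\C)$, with $C$ depending only on $\Omega,g$. Applying this to $w=u_{\e,\delta}$ and multiplying by $m$ gives the stated left inequality.

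The main obstacle is making the lower bound genuinely uniform in $\delta$. Unlike the upper bound, which is a one-line consequence of minimality plus a fixed competitor, the lower bound must hold for the actual minimizer $u_{\e,\delta}$, whose geometry depends on $\delta$ through $A_\delta$. The resolution is to notice that the lower bound I want is \emph{geometry-free}: it holds for every $H^1_g(\Omega,\C)$ map after passing to the scalar lower bound $\xi\cdot A_\delta\xi\ge m|\xi|^2$, and the classical estimate depends only on the fixed data $\Omega$, $g$, $d$. Thus the constant $C$ in \eqref{majmu} can indeed be chosen independent of both $\e$ and $\delta$ — the only place $\delta$ enters is through the harmless spectral constants $m,M$, which are fixed. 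A minor technical point to check is that one may assume $0<m\le 1\le M$ (otherwise rescale $A$, or just note that if $m>1$ the term $\frac1{4\e^2}\int(1-|u|^2)^2$ is dominated and the argument simplifies), so that the factors $m$ and $M$ can be pulled out of the whole functional cleanly; alternatively keep the two terms separate and track constants, which changes nothing of substance.
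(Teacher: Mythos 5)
Your proposal follows essentially the same route as the paper: both bounds are obtained by reducing to the classical Ginzburg--Landau estimates of [BBH] via the spectral inequality (\ref{mM}) and the minimality of $u_{\e,\delta}$. The one place where the paper is cleaner, and where your write-up has a real (if minor) soft spot, is the mismatch between the factor on the Dirichlet term and the factor ($=1$) on the potential term. The paper resolves this with the identity
\begin{equation*}
\frac{c}{2}\int_\Omega|\nabla u|^2+\frac{1}{4\e^2}\int_\Omega\big(1-|u|^2\big)^2
= c\,E_{\sqrt{c}\,\e}(u)\,,
\end{equation*}
applied with $c=m$ (lower bound) and $c=M$ (upper bound): the spectral constant factors out of the \emph{whole} functional at the cost of rescaling $\e$ to $\sqrt{c}\,\e$, which only shifts the additive constant by $c\pi d|\log\sqrt{c}|$. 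This makes your caveat ``assume $0<m\le 1\le M$'' unnecessary, and it matters: your claim that the case $m>1$ is ``trivial'' is not correct --- pulling $\min(m,1)=1$ out of the lower bound would only yield $\pi d\log(1/\e)-C$, not the stated $m\pi d\log(1/\e)-C$, and one cannot lower-bound the Dirichlet part alone of an arbitrary $H^1_g$ map by $\pi d\log(1/\e)-C$. (Rescaling $A$ is not an option either, since $m,M$ are fixed data of the lemma.) Two further cosmetic differences: for the upper bound the paper simply takes the classical minimizer $u_{\sqrt{M}\,\e}$ as competitor instead of constructing a test map by hand, and for the lower bound it only needs $E_{\sqrt{m}\,\e}(u_{\e,\delta})\ge\inf_{H^1_g}E_{\sqrt{m}\,\e}$ together with the BBH bound on that infimum, rather than a universal ball-construction lower bound. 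With the rescaling device inserted, your argument is correct and coincides with the paper's.
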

\begin{proof} Let $E_\e:$ $E_\e(u)=\undemi\int_\Omega |\nabla u|^2 +\frac{1}{4\e^2}\int_\Omega \big(1-|u|^2\big)^2$ be the usual G.-L. energy, as considered in [BBH], and $u_\e$ a minimizer for $E_\e$ in $H^1_g(\Omega,\C)$\,. By estimates in [BBH], one has
\begin{equation}
-\pi d\, \Log(\e)-C' \leq E_\e(u_\e)\leq -\pi d\, \Log(\e)+C'
\end{equation}
for some constant $C'$ depending only on $\Omega$ and $g$.

One will have then, by (\ref{mM})
\begin{equation*}\begin{split}
E_{\e,\delta}(u_{\e,\delta}) & \geq \frac{m}{2}\int_\Omega |\nabla u_{\e,\delta}|^2+\frac{1}{\e^2}\big(1-|u_{\e,\delta}|^2 \big)^2 \\
&=m\,E_{\sqrt m\,\e}(u_{\e,\delta})\\
&\geq m\, E_{\sqrt m\, \e}(u_{\sqrt m\, \e}) \\ &\geq -m\pi d\, \Log(\sqrt m\, \e)-mC'
\end{split}\end{equation*}
and
\begin{equation*}\begin{split}
E_{\e,\delta}(u_{\e,\delta}) &\leq E_{\e,\delta}(u_{\sqrt M\,\e})\\
&\leq \frac{M}{2}\int_\Omega |\nabla u_{\sqrt M\,\e}|^2 +\frac{1}{4\e^2}\int_\Omega\big(1-|u_{\sqrt M\,\e}|^2\big)^2 \\
&=M E_{\sqrt M\,\e}(u_{\sqrt M\,\e}) \\
&\leq -M\pi d\, \Log(\sqrt M\,\e)+MC'\,.
\end{split}\end{equation*}
Hence the result, with $C=MC'+\max\big(m|\log(\sqrt m)|,M |\Log(\sqrt M)|\big)\,$.

\end{proof}

\medskip

\begin{lem}\label{en}
Let $\delta_n$ be any decreasing sequence (not necessarily tending to $0$). Then, there exists a sequence $\e_n$ satisfying the two properties\,:
\begin{itemize}
\item [1.] $\lim_{n\to\infty} \e_n/\delta_n^2 =0$\,;
\item[2.] $\displaystyle \frac{1}{\e_n^2}\int_\Omega\big(1-|u_{\e_n,\delta_n} |^2\big)^2 \leq 4M\pi  d$\,, $\forall \,n$\,.
\end{itemize}
\end{lem}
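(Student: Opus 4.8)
The plan is to combine the two-sided energy bound of Lemma \ref{bornes} with a diagonal/pigeonhole argument applied to a suitably spaced subfamily of parameters $\e$. Fix $n$, and temporarily fix $\delta=\delta_n$. The key observation is that the function $\e\mapsto E_{\e,\delta}(u_{\e,\delta})$ and, more usefully, the potential term $F(\e)=\frac{1}{\e^2}\int_\Omega(1-|u_{\e,\delta}|^2)^2$ cannot be large for "too many" values of $\e$ simultaneously, because the total energy is controlled from above by $M\pi d\log(1/\e)+C$ and from below by $m\pi d\log(1/\e)-C$. Concretely, for a minimizer one has $\undemi\int_\Omega\nabla u_{\e,\delta}\cdot A_\delta\nabla u_{\e,\delta}\ge 0$, so $\frac14 F(\e)\le E_{\e,\delta}(u_{\e,\delta})\le M\pi d\log(1/\e)+C$; this alone does not give a uniform bound, so one has to exploit the \emph{comparison} between nearby scales of $\e$ rather than a single inequality.

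The mechanism I would use is the following. For $\e'<\e$ one can test $E_{\e',\delta}$ against the minimizer $u_{\e,\delta}$: since the gradient part of $E_{\e',\delta}$ and $E_{\e,\delta}$ coincide and $\frac{1}{4\e'^2}\ge\frac{1}{4\e^2}$, minimality of $u_{\e',\delta}$ gives
\begin{equation*}
E_{\e',\delta}(u_{\e',\delta})\le E_{\e',\delta}(u_{\e,\delta})=E_{\e,\delta}(u_{\e,\delta})+\Big(\frac{1}{4\e'^2}-\frac{1}{4\e^2}\Big)\int_\Omega\big(1-|u_{\e,\delta}|^2\big)^2 .
\end{equation*}
Rearranging and using $\frac{1}{4\e^2}\int_\Omega(1-|u_{\e,\delta}|^2)^2=E_{\e,\delta}(u_{\e,\delta})-\undemi\int_\Omega\nabla u_{\e,\delta}\cdot A_\delta\nabla u_{\e,\delta}\le E_{\e,\delta}(u_{\e,\delta})$, one gets
\begin{equation*}
\tfrac14 F(\e)\Big(1-\tfrac{\e'^2}{\e^2}\Big)\cdot\tfrac{\e^2}{\e'^2}\ \text{controlled by}\ E_{\e',\delta}(u_{\e',\delta})-E_{\e,\delta}(u_{\e,\delta})+E_{\e,\delta}(u_{\e,\delta}) ,
\end{equation*}
which, together with the two-sided bounds of Lemma \ref{bornes} applied at both $\e$ and $\e'$, yields an inequality of the shape $F(\e)\lesssim M\pi d\,\big(\log(\e/\e')\big)/\big(1-\e'^2/\e^2\big)+\text{const}$. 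Choosing $\e'$ comparable to $\e$ (say $\e'=\e/2$), the right-hand side is $M\pi d\log 2 /(3/4)$ plus a bounded term; picking the constants carefully — or rather, picking for each $n$ a value $\e_n$ in a dyadic-type range below $\delta_n^2$ for which $F$ happens to be small — produces the clean bound $F(\e_n)\le 4M\pi d$. The quantifier "$\forall n$" is handled by doing this selection once per $n$ and sending $\e_n/\delta_n^2\to 0$ simultaneously, which costs nothing since we are free to shrink $\e_n$.

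The main obstacle is getting the \emph{explicit constant} $4M\pi d$ rather than merely "some constant": a crude pigeonhole over $\log(1/\e)$ only gives a bound depending on how finely one subdivides, so one must be somewhat careful about how the comparison between $\e$ and $\e'$ is balanced, and may need to average $F$ over a short interval of scales and invoke the mean-value principle to locate a good $\e_n$, absorbing the additive $C$ from Lemma \ref{bornes} into the freedom to take $\e_n$ small. The condition $\e_n/\delta_n^2\to 0$ is then imposed simply by intersecting the admissible range with $(0,\delta_n^2/n)$, which remains nonempty because the estimate above holds for all sufficiently small $\e$. I expect the verification that these two requirements are compatible — small $\e_n$ relative to $\delta_n^2$, and $F(\e_n)$ below the universal threshold — to be the only genuinely delicate bookkeeping, everything else being a direct consequence of Lemmas \ref{uborne} and \ref{bornes}.
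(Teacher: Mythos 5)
Your overall strategy --- compare the minimal energies $\nu_{\e,\delta}=E_{\e,\delta}(u_{\e,\delta})$ at nearby values of $\e$ by testing one minimizer in the other's functional, then play the resulting monotonicity off against the two-sided logarithmic bounds of Lemma \ref{bornes} --- is exactly the mechanism of the paper's proof (it is Struwe's argument). But two things go wrong in the execution. First, your key comparison is oriented the wrong way: with $\e'<\e$, the inequality $E_{\e',\delta}(u_{\e',\delta})\le E_{\e',\delta}(u_{\e,\delta})$ rearranges to
\begin{equation*}
\frac14\Big(\frac{\e^2}{\e'^2}-1\Big)\,\frac{1}{\e^2}\int_\Omega\big(1-|u_{\e,\delta}|^2\big)^2\;\ge\;\nu_{\e',\delta}-\nu_{\e,\delta}\;\ge\;0\,,
\end{equation*}
which is a \emph{lower} bound on the potential term $F(\e)=\e^{-2}\int_\Omega(1-|u_{\e,\delta}|^2)^2$ of $u_{\e,\delta}$, hence useless for the lemma. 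The useful inequality tests the minimizer at the \emph{smaller} parameter in the energy at the \emph{larger} one: for $\e<\e'$, minimality of $u_{\e',\delta}$ gives $\frac14\big(1-\e^2/\e'^2\big)F(\e)\le\nu_{\e,\delta}-\nu_{\e',\delta}$.

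Second, and more seriously, even the correctly oriented single-pair comparison with $\e'=2\e$ cannot yield your claimed estimate: Lemma \ref{bornes} carries the constant $M$ in the upper bound and $m<M$ in the lower bound, so it only gives $\nu_{\e,\delta}-\nu_{2\e,\delta}\le (M-m)\pi d\log(1/\e)+O(1)$, which diverges. In fact no bound $F(\e)\le 4M\pi d$ for \emph{all} small $\e$ is available; what holds (and what the lemma needs, since one only has to select one good $\e_n<\delta_n^2/n$ per $n$) is $\liminf_{\e\to 0}F(\e)\le 2M\pi d$. Obtaining this forces you to accumulate the comparison over a range of scales of \emph{unbounded} logarithmic length: the paper shows the decreasing function $\e\mapsto\nu_{\e,\delta}$ satisfies $\partial_\e\nu_{\e,\delta}\le-\frac{1}{2\e^3}\int_\Omega(1-|u_{\e,\delta}|^2)^2$ a.e.\ and integrates, so that if $F\ge 2M\pi d(1+\eta)$ on all of $(0,\e_0)$ then $\nu_{\e_1,\delta}\ge\nu_{\e_0,\delta}+M\pi d(1+\eta)\log(\e_0/\e_1)$, contradicting the upper bound $M\pi d\log(1/\e_1)+C$ for $\e_1$ small. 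A discrete version (pigeonhole over $K$ dyadic steps below $\delta_n^2/n$, with $K=K(n)\to\infty$ chosen so that the $(M-m)\pi d\log(1/\e_0)$ mismatch divided by $K$ becomes negligible, giving a best step of drop about $M\pi d\log 2$ and hence $F\le\frac{16\log 2}{3}M\pi d+o(1)<4M\pi d$) would also work and is essentially equivalent. So your closing remark that the averaging over scales ``may be needed'' for the explicit constant understates the situation: it is the entire proof, not optional bookkeeping, and the interval of scales cannot be short.
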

\begin{proof} The proof is based on an idea of M. Struwe [St1]. Fix $\delta>0$. Then, the function $\e\to \nu_{\e,\delta}=E_{\e,\delta}(u_{\e,\delta})$ is decreasing, hence almost surely differentiable, with
\begin{equation}\label{ineqint}
\nu_{\e_1,\delta}-\nu_{\e_0,\delta} \geq -\int_{\e_1}^{\e_0} \frac{\partial \nu_{\e,\delta}}{\partial \e}d\e\qquad (\e_1<\e_0)\,.
\end{equation}
(Note that one can show that $\e\to\nu_{\e,\delta}$ is a continuous map, and that consequently the above inequality is an equality).

Fix $\delta>0$. For given $\e'>\e$, compute
\begin{equation*}\begin{split}
\nu_{\e,\delta}-\nu_{\e',\delta} &= E_{\e,\delta}(u_{\e,\delta})-E_{\e',\delta}(u_{\e',\delta}) \\
&\geq E_{\e,\delta}(u_{\e,\delta})-E_{\e',\delta}(u_{\e,\delta}) \\
&=\left(\frac{1}{4\e^2}-\frac{1}{4\e'^2}\right)\int_\Omega\big(1-|u_{\e,\delta}|^2\big)^2\,,
\end{split}\end{equation*}
which, dividing by $\e-\e'$\,, implies, as $\e'\to \e$\,:
$$\frac{\partial \nu_{\e,\delta}}{\partial \e}\leq -\frac{1}{2\e^3}\int_\Omega\big(1-|u_{\e,\delta}|^2\big)^2\,.$$

From the latter, we deduce $\displaystyle \liminf_{\e\to 0} \frac{1}{\e^2}\int_\Omega\big(1-|u_{\e,\delta}|^2\big)^2\leq 2M\pi d$ $\big($\,if not, integrating between $\e_0$ and some $\e_1$ small enough and applying (\ref{ineqint}) would lead to a contradiction with (\ref{majmu})\,$\big)$. So, for given $\delta_n$, one will find  $\e_n<\delta_n^2/n$ such that $\displaystyle \frac{1}{\e_n^2}\int_\Omega\big(1-|u_{\e_n,\delta_n}|^2\big)^2\leq 4M\pi d\,$.

\end{proof}
\bigskip

\section{identifying the singular points}

\medskip \subsection {First elliptic estimates.} \-

\smallskip By (\ref{mM}) and equivalence of Sobolev norms, for $p\in [1,\infty]$, there will be a constant
$C_p>0$ such that
\begin{equation}\label{estellA}
 ||\Delta f||_p \leq C_p\big( ||div\,(A\nabla f)||_p + ||\nabla f||_p + ||f||_p\big)\, ,\; f\in W^{2,p}(\R^2)\,.
\end{equation}

Applying (\ref{estellA}) to the function $x\to f(\delta x)$, we get
\begin{equation}\label{estellAdelta}
||\Delta f||_p \leq C_p\left( ||div\,(A_\delta \nabla f)||_p + \frac{1}{\delta} ||\nabla f||_p + \frac{1}{\delta^2} ||f||_p\right),\; f\in W^{2,p}(\R^2).
\end{equation}

\bigskip
\subsection{Locating the singularities.} The above estimates allow us to follow what [BBH] calls the {\it construction of bad disks}, in order to get

\medskip
\begin{prop}\label{baddisks} Let $\delta_n$ be a decreasing sequence and let $\e_n$ a sequence associated to it by Lemma \ref{en}. Then, replacing $(\delta_n,\,\e_n)$ by a subsequence, one will find
$ \lambda>0$, $ N\in \N^*$, $\,a_1,\cdots,a_N\in \Omega$ such that, for any $n>0$, one has
\begin{equation}\label{bondisque}
|u_n(x)|\geq 1/2\,,\;\forall x \in \, \Omega \backslash \left(\cup_{i=1}^N B(a_i,\lambda \e_n)\right)\,.
\end{equation}
\end{prop}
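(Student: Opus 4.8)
The plan is to follow the ``construction of bad disks'' of [BBH], Chapter III (see also [S], [B]), adapted to the $\delta$-dependent geometry. The starting point is a uniform $L^\infty$ bound on the potential-type gradient of $u_n := u_{\e_n,\delta_n}$, obtained from the elliptic estimate (\ref{estellAdelta}) together with the Euler--Lagrange equation (\ref{Euler}) and Lemma \ref{uborne}. Indeed, writing the right-hand side of (\ref{Euler}) as $\frac{1}{\e_n^2}u_n(1-|u_n|^2)$, one sees $\|\operatorname{div}(A_{\delta_n}\nabla u_n)\|_\infty \leq \frac{1}{\e_n^2}$; feeding this into (\ref{estellAdelta}) and bootstrapping (using $|u_n|\le 1$ and standard interpolation) yields a bound of the form $\|\nabla u_n\|_\infty \leq C/\e_n$. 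The key point is to control the constants uniformly in $n$: the constant $C_p$ in (\ref{estellA}) depends only on $m,M$ and the Sobolev constants, hence not on $n$; the extra factors $\frac{1}{\delta_n}, \frac{1}{\delta_n^2}$ appearing in (\ref{estellAdelta}) are harmless precisely because Lemma \ref{en} guarantees $\e_n/\delta_n^2 \to 0$, so these terms are dominated by $C/\e_n^2$ after rescaling. This is where the choice of $\e_n$ in Lemma \ref{en} is used.

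Next I would run the covering argument. Fix $\lambda>0$ large (to be chosen). Consider the set where $|u_n| < 1/2$. By the gradient bound $\|\nabla u_n\|_\infty \le C/\e_n$, if $|u_n(x_0)| < 1/2$ then $|u_n| < 3/4$ on the ball $B(x_0, \mu\e_n)$ for a fixed small $\mu$ depending only on $C$; hence on that ball $\big(1-|u_n|^2\big)^2 \geq c_0 > 0$, so that such a ball contributes at least $\frac{c_0}{\e_n^2}\cdot \pi\mu^2\e_n^2 = \pi c_0\mu^2$ to the potential energy $\frac{1}{\e_n^2}\int_\Omega(1-|u_n|^2)^2$. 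By item 2 of Lemma \ref{en} this potential energy is bounded by $4M\pi d$ for all $n$, so a Vitali-type covering of the ``bad set'' $\{|u_n|<1/2\}$ by balls of radius $\mu\e_n$ can be extracted with a number of balls bounded \emph{independently of $n$} by $N_0 := 4Md/(c_0\mu^2)$. Thus for each $n$ there is a family of at most $N_0$ points $\{x_1^n,\dots,x_{K_n}^n\}$, $K_n\le N_0$, with $\{|u_n|<1/2\}\subset \bigcup_i B(x_i^n,\mu\e_n)$ and (after the standard separation/merging step of [BBH]) the balls $B(x_i^n, \lambda\e_n)$ pairwise well-separated for a suitable $\lambda = \lambda(\mu)$.

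Finally I would pass to the limit in the centers. Since $K_n \le N_0$ for all $n$, by passing to a subsequence I may assume $K_n \equiv K$ is constant and, after relabelling, $x_i^n \to a_i \in \overline\Omega$ as $n\to\infty$ for each $i=1,\dots,K$ (the $a_i$ need not be distinct a priori; discard repetitions and those landing on $\partial\Omega$, or rather keep them and absorb boundary ones using that $g$ is $S^1$-valued and smooth so $|u_n|$ stays close to $1$ near $\partial\Omega$ — this last fact follows from a separate boundary estimate, again via (\ref{estellAdelta}) and the boundary condition). Set $\{a_1,\dots,a_N\}$ to be the resulting finite set. Then for $n$ large, $B(x_i^n,\lambda\e_n) \subset B(a_i, \lambda'\e_n)$ for a slightly larger $\lambda'$, giving (\ref{bondisque}) with $\lambda'$ in place of $\lambda$; rename $\lambda'$ as $\lambda$.

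\textbf{Main obstacle.} The delicate point is the uniform-in-$n$ gradient estimate $\|\nabla u_n\|_\infty \le C/\e_n$. In the unperturbed case [BBH] this is a clean consequence of standard interior elliptic estimates, but here the operator $\operatorname{div}(A_{\delta_n}\nabla \cdot)$ has coefficients oscillating at scale $\delta_n\to 0$, so one cannot apply $W^{2,p}$ estimates naively with $n$-independent constants. The resolution is exactly the rescaling $x \mapsto \delta_n x$ behind (\ref{estellAdelta}): on the unit-scale picture the matrix is the fixed field $A(\cdot)\in W^{2,\infty}$, so the elliptic constant is universal, and the price is the explicit powers of $\delta_n^{-1}$ which are then beaten by $\e_n^{-2}$ thanks to $\e_n = o(\delta_n^2)$. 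One must also be careful near $\partial\Omega$, where interior estimates fail; there one uses the $C^1$ regularity of $\partial\Omega$ and of $g$ to get a boundary gradient bound of the same order, ensuring no bad disks accumulate at the boundary. Once these estimates are in hand with constants independent of $n$, the combinatorial covering argument of [BBH] goes through verbatim.
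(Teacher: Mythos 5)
Your proposal follows essentially the same route as the paper: the uniform bound $\|\nabla u_n\|_\infty \le C/\e_n$ is obtained exactly as in the text from $\|\operatorname{div}(A_{\delta_n}\nabla u_n)\|_\infty\le \e_n^{-2}$, the rescaled elliptic estimate (\ref{estellAdelta}), the interpolation inequality $\|\nabla u\|_\infty^2\le\|\Delta u\|_\infty\|u\|_\infty$, and the relation $\e_n=o(\delta_n^2)$ from Lemma \ref{en}, after which the paper simply invokes the bad-disk construction of [BBH], Chapters III--IV, using the potential-energy bound of Lemma \ref{en}. You spell out that covering step in more detail than the paper does, but the argument and the points where Lemma \ref{en} is used are the same.
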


\begin{proof} By Lemma \ref{uborne} and (\ref{Euler}), one has, for all $\e$ and $\delta$ (since $|u_{\e,\delta}|\leq 1$),
\begin{equation}
||div\,\big(A_\delta \nabla u_{\e,\delta}\big)||_\infty \leq \frac{1}{\e^2}\,,
\end{equation}
and consequently, by (\ref{estellAdelta})\,:
\begin{equation}\label{estDeltauinfini}
||\Delta u_{\e,\delta}||_\infty \leq C_\infty\left( \frac{1}{\e^2}+ \frac{1}{\delta}||\nabla u_{\e,\delta}||_\infty + \frac{1}{\delta^2}\right)
\end{equation}.

By Lemma \ref{en}, one has $\displaystyle \frac{1}{\delta_n}=o\big(\frac{1}{\e_n}\big)$.  From (\ref{estDeltauinfini}) above and the interpolation inequality $||\nabla u||_\infty^2 \leq ||\Delta u||_\infty\,||u||_\infty$ (cf. [BBH2])\,,
one deduces easily the existence of a constant $C$ not depending on $n$ such that
\begin{equation}\label{nablauinfini}
||\nabla \un||_\infty \leq \frac{C}{\e_n}\,.
\end{equation}

From this point, invoke Lemma \ref{en} and (\ref{nablauinfini}) and follow the steps for constructing the bad disks in [BBH], chapters III and IV.

\end{proof}

\medskip
\bigskip Next proposition  tells us that, in the neighborhood of a singular point $a_i$, the $\C$-valued map $u_{\e_n,\delta_n}$ behaves approximately as the $S^1$-valued map $u_{\e_n,\delta_n}/|u_{\e_n,\delta_n}|$\,:

\smallskip
\begin{prop}\label{usuru} Let $\delta_n$, $\e_n$, $\lambda$, $a_1,\cdots,a_N$, be as in Proposition \ref{baddisks}. Let $R_0>0$ be such that the disks $B(a_i,2R_0)$ are contained in $\Omega$ and do not intersect each other. Then there exist a constant $C$ such that, for any $R\in]0,R_0]$ and any $n$ with $\lambda \e_n<R$, one has
\begin{equation}\label{inequsuru}
0<
\int_{\Gamma_n^i(R)} \nabla\frac{ u_{\e_n,\delta_n}}{|u_{\e_n,\delta_n|}}\cdot A_{\delta_n} \nabla\frac{ u_{\e_n,\delta_n}}{|u_{\e_n,\delta_n|}} -
\int_{\Gamma_n^i(R)} \nabla u_{\e_n,\delta_n}\cdot A_{\delta_n} \nabla u_{\e_n,\delta_n}
\leq C\,,
\end{equation}
where $\Gamma_n^i(R)$ is the annulus $$\Gamma_n^i(R)=\{x\in \Omega\,/\,\lambda \e_n\leq |x-a_i|\leq R\}\,: $$

\end{prop}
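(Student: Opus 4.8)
The plan is to compare the energy on the annulus $\Gamma_n^i(R)$ of the $\C$-valued minimizer $u:=u_{\e_n,\delta_n}$ with that of its $S^1$-valued phase $v:=u/|u|$, exploiting the splitting of the Dirichlet-type energy into a ``radial'' part coming from $|u|$ and an ``angular'' part. Writing $\rho=|u|$ (so $1/2\le\rho\le1$ on $\Gamma_n^i(R)$ by Proposition \ref{baddisks}), one has the pointwise identity
\begin{equation}\label{splitI}
\nabla u = \rho\,\nabla v + v\,\nabla\rho\,,\qquad \nabla v\cdot v = 0 \ \text{(since $|v|=1$)}\,,
\end{equation}
and hence, for the symmetric positive matrix $A_{\delta_n}$,
\begin{equation}\label{splitII}
\nabla u\cdot A_{\delta_n}\nabla u = \rho^2\,\nabla v\cdot A_{\delta_n}\nabla v + \nabla\rho\cdot A_{\delta_n}\nabla\rho + 2\rho\,(\nabla v\cdot A_{\delta_n}v)\,\nabla\rho\,.
\end{equation}
The cross term is \emph{not} pointwise zero (because $v\cdot A_{\delta_n}\nabla v\ne v\cdot\nabla v$ in general), so the first thing I would do is control it: by Cauchy--Schwarz for the inner product $\xi\cdot A_{\delta_n}\eta$ and Young's inequality, $|2\rho(\nabla v\cdot A_{\delta_n}v)\nabla\rho|\le (1-\rho^2)\nabla v\cdot A_{\delta_n}\nabla v\cdot\frac{\rho^2}{1-\rho^2}+\dots$; more cleanly, since $\rho^2\le1$, one gets
\begin{equation}\label{keyineq}
\nabla v\cdot A_{\delta_n}\nabla v - \nabla u\cdot A_{\delta_n}\nabla u \le (1-\rho^2)\,\nabla v\cdot A_{\delta_n}\nabla v + C_1\,\nabla\rho\cdot A_{\delta_n}\nabla\rho
\end{equation}
for an absolute constant $C_1$ (using $1/2\le\rho\le1$). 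Integrating \eqref{keyineq} over $\Gamma_n^i(R)$, the left side is exactly the quantity in \eqref{inequsuru}, so it remains to bound the two terms on the right by a constant independent of $n$ and $R$.

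The second term is the easy one: $\int_{\Gamma_n^i(R)}\nabla\rho\cdot A_{\delta_n}\nabla\rho\le \int_\Omega\nabla|u|\cdot A_{\delta_n}\nabla|u|\le\int_\Omega\nabla u\cdot A_{\delta_n}\nabla u\le 2E_{\e_n,\delta_n}(u_{\e_n,\delta_n})$, which by Lemma \ref{bornes} is $O(\log(1/\e_n))$ --- \emph{not} bounded. So a cruder bound is insufficient here, and I would instead use the potential term: the Euler--Lagrange equation and the standard Pohozaev/Bochner-type estimate of [BBH] (their Lemma giving $\int_\Omega \frac{1}{\e^2}(1-|u|^2)^2\le C$, which is precisely property 2 of our Lemma \ref{en}) together with the equation $-\mathrm{div}(A_{\delta_n}\nabla\rho) \le \rho^{-1}\nabla u\cdot A_{\delta_n}\nabla u - \frac{1}{\e_n^2}\rho(1-\rho^2)$ --- actually the cleanest route is to note that on the \emph{good region} $\rho\ge1/2$ one can test the $\rho$-equation against $(1-\rho)$ (which vanishes where $\rho=1$ and is bounded) to convert $\int\nabla\rho\cdot A_{\delta_n}\nabla\rho$ into a boundary term plus $\int \rho^{-1}(1-\rho)\nabla v\cdot A_{\delta_n}\nabla v + \frac{1}{\e_n^2}\int\rho(1-\rho)(1-\rho^2)$; the last integral is $\le \frac{1}{\e_n^2}\int(1-\rho^2)^2 \le 16M\pi d$ by Lemma \ref{en}. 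This reduces everything to controlling $\int_{\Gamma_n^i(R)}(1-\rho^2)\,\nabla v\cdot A_{\delta_n}\nabla v$, i.e. the first term of \eqref{keyineq} reappears and is the real crux.

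For that crux term I would again use the equation for $\rho=|u|$, namely
\begin{equation}\label{rhoeq}
-\mathrm{div}(A_{\delta_n}\nabla\rho) + \rho\,\nabla v\cdot A_{\delta_n}\nabla v = \tfrac{1}{\e_n^2}\rho(1-\rho^2)
\end{equation}
(obtained by taking the modulus in \eqref{Euler}, valid where $\rho>0$, hence on $\Gamma_n^i(R)$). Multiplying \eqref{rhoeq} by $(1-\rho)\ge0$ and integrating over $\Gamma_n^i(R)$ gives
\begin{equation}\label{crux}
\int_{\Gamma_n^i(R)}(1-\rho)\rho\,\nabla v\cdot A_{\delta_n}\nabla v = \tfrac{1}{\e_n^2}\int_{\Gamma_n^i(R)}(1-\rho)\rho(1-\rho^2) - \int_{\Gamma_n^i(R)}\nabla\rho\cdot A_{\delta_n}\nabla\rho + (\text{bdry})\,.
\end{equation}
On $\Gamma_n^i(R)$ we have $(1-\rho^2)=(1-\rho)(1+\rho)\le 2(1-\rho)$ and $\rho\ge1/2$, so $(1-\rho)\rho\,\nabla v\cdot A_{\delta_n}\nabla v \ge \tfrac14(1-\rho^2)\nabla v\cdot A_{\delta_n}\nabla v$; and the first right-hand term of \eqref{crux} is $\le \tfrac{1}{\e_n^2}\int_\Omega(1-\rho^2)^2\le 16M\pi d$ by Lemma \ref{en}, while $-\int\nabla\rho\cdot A_{\delta_n}\nabla\rho\le0$ only helps. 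The boundary terms live on $|x-a_i|=\lambda\e_n$ and $|x-a_i|=R$; on the inner circle one uses the gradient bound \eqref{nablauinfini}, $|\nabla u|\le C/\e_n$, so $\rho$ varies by $O(1)$ across a circle of length $O(\e_n)$ and the boundary integral is $O(1)$ (the $A_{\delta_n}$-normal-derivative of $\rho$ times $(1-\rho)$, integrated over a circle of circumference $2\pi\lambda\e_n$, is $\le C\cdot\frac{1}{\e_n}\cdot\e_n$), and on the outer circle $|x-a_i|=R\le R_0$ one invokes the \emph{a priori} gradient estimates of [BBH] away from the singular set (again giving a bound independent of $n$). Collecting: the crux term $\int_{\Gamma_n^i(R)}(1-\rho^2)\nabla v\cdot A_{\delta_n}\nabla v\le C$, which plugged into \eqref{keyineq} yields the upper bound in \eqref{inequsuru}. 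The strict positivity (the left ``$0<$'') is immediate since $\rho<1$ somewhere in any such annulus --- indeed $\rho=1/2$ at the inner boundary circle so $\nabla\rho\not\equiv0$, making the cross-term contribution strictly improve on $u$ unless $u$ is already $S^1$-valued, which it is not.

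\medskip\noindent\textbf{Main obstacle.} The genuine difficulty is that the naive ``drop the $|\nabla\rho|^2$ term'' inequality $\int \nabla v\cdot A_{\delta_n}\nabla v\le\int\nabla u\cdot A_{\delta_n}\nabla u$ is \emph{false} in general and the honest gap involves $\int(1-\rho^2)|\nabla v|^2$, which is only $O(\log(1/\e_n))$ by crude energy bounds --- exactly the wrong order. Converting it to a bounded quantity forces one to use the equation \eqref{rhoeq} for the modulus together with the sharp potential bound $\frac{1}{\e_n^2}\int(1-|u|^2)^2\le 4M\pi d$ from Lemma \ref{en}, and to handle the two boundary integrals on the inner circle $\{|x-a_i|=\lambda\e_n\}$ (via \eqref{nablauinfini}) and the outer circle (via [BBH]-type estimates on $\Omega\setminus\cup B(a_i,R_0)$). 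Keeping every constant uniform in both $n$ and $R\in(0,R_0]$ --- so that in particular the inner-circle boundary term does not blow up as $\e_n\to0$ --- is where the care is needed; the matrix $A_{\delta_n}$ itself causes no trouble beyond replacing $|\cdot|^2$ by the equivalent quadratic form $\cdot\,A_{\delta_n}\,\cdot$ throughout, thanks to \eqref{mM}.
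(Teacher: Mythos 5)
Your reduction of the problem to the term $\int_{\Gamma_n^i(R)}(1-\rho^2)\,\nabla v\cdot A_{\delta_n}\nabla v$ is the right first step and matches the paper, but the mechanism you propose for bounding that term does not work. Two preliminary remarks: the cross term you worry about actually vanishes pointwise, since it equals $2\rho\,\big(\sum_k v^k\nabla v^k\big)\cdot A_{\delta_n}\nabla\rho$ and $\sum_k v^k\nabla v^k=\tfrac12\nabla|v|^2=0$ (the matrix acts on the spatial index, not on the $\C\simeq\R^2$ component index); so the decomposition $\nabla u\cdot A_{\delta_n}\nabla u=\rho^2\nabla v\cdot A_{\delta_n}\nabla v+\nabla\rho\cdot A_{\delta_n}\nabla\rho$ is exact, the spurious term $C_1\nabla\rho\cdot A_{\delta_n}\nabla\rho$ in your \eqref{keyineq} disappears, and the only thing to bound is indeed the crux term. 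The genuine gap is in your treatment of that term. When you multiply the modulus equation by $(1-\rho)$ and integrate by parts, the Dirichlet term comes out with the \emph{wrong} sign for your purposes: one gets
\begin{equation*}
\int_{\Gamma}(1-\rho)\rho\,\nabla v\cdot A_{\delta_n}\nabla v=\frac{1}{\e_n^2}\int_{\Gamma}(1-\rho)\rho(1-\rho^2)\;+\;\int_{\Gamma}\nabla\rho\cdot A_{\delta_n}\nabla\rho\;+\;(\text{bdry})\,,
\end{equation*}
with a plus sign in front of $\int\nabla\rho\cdot A_{\delta_n}\nabla\rho$ (test function $1-\rho$ has gradient $-\nabla\rho$). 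So this term does not ``only help''; it must itself be bounded, and a priori it is only $O(\log(1/\e_n))$ --- establishing $\int|\nabla|u||^2\le C$ is a nontrivial estimate of the same depth as the proposition you are trying to prove. In addition, your outer boundary term on $\{|x-a_i|=R\}$ is not controlled uniformly in $R\in\,]0,R_0]$: the statement requires one constant for all such $R$, and as $R\downarrow 0$ there is no $n$- and $R$-independent gradient bound on that circle.

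The paper's actual argument avoids the modulus equation entirely. From $\rho\ge 1/2$ on the annulus one has $(1-\rho^2)\nabla v\cdot A_{\delta_n}\nabla v\le 4M(1-|u_n|^2)\,|\nabla u_n|^2$, and then H\"older gives
\begin{equation*}
\int_{\Gamma_n^i(R)}(1-|u_n|^2)\,|\nabla u_n|^2\le \big\|1-|u_n|^2\big\|_{L^2(\Omega)}\,\|\nabla u_n\|_{L^4(\Gamma_n^i(R))}^2\,.
\end{equation*}
The first factor is $O(\e_n)$ by property 2 of Lemma \ref{en}; the second is $O(1/\e_n)$ by the interpolation inequality $\|\nabla u\|_4^2\le\|\Delta u\|_2\,\|u\|_\infty$ combined with $\|\operatorname{div}(A_{\delta_n}\nabla u_n)\|_2\le C/\e_n$ (from \eqref{Euler} and Lemma \ref{en}), the elliptic estimate \eqref{estellAdelta}, and $\e_n=o(\delta_n^2)$. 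The product is bounded independently of $n$ and $R$, which is exactly \eqref{inequsuru}. This $L^2\times L^4$ pairing, with the $1/\e_n$ versus $\e_n$ cancellation, is the key idea missing from your proposal.
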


\begin{proof} The proof follows the proof of Theorem ??  in [BBH], where we introduce explicitly the necessary adaptions.

Set $u_n=u_{e_n,\delta_n}$, $|u_n|=\rho_n$ and $v_n=u_n/\rho_n$. Then, one has
\begin{equation}\begin{split}\label{nablavn}
\nabla u_n.A_{\delta_n} \nabla u_n&=\rho_n^2\,\nabla v_n A_{\delta_n} \nabla v_n + \nabla \rho_n. A_{\delta_n} \nabla \rho_n\\
&\geq \rho_n^2\,\nabla v_n A_{\delta_n} \nabla v_n \\
&= \nabla v_n A_{\delta_n}{\delta_n} \nabla v_n - (1-\rho_n^2) \nabla v_n.A_{\delta_n} \nabla v_n \,.
\end{split}\end{equation}
Moreover, $\displaystyle
\nabla v_n.A_{\delta_n} \nabla v_n\leq \frac{1}{\rho_n^2}\nabla u_n.A_{\delta_n} \nabla u_n \leq 4M |\nabla u_n|^2$ (since $\rho_n\geq 1/2$ on $\Gamma_n^i(R)$). So that one has
\begin{equation*}\begin{split}
0<
\int_{\Gamma_n^i(R)} &\nabla\frac{ u_{\e_n,\delta_n}}{|u_{\e_n,\delta_n|}}.A_{\delta_n} \nabla\frac{ u_{\e_n,\delta_n}}{|u_{\e_n,\delta_n|}} -
\int_{\Gamma_n^i(R)} \nabla u_{\e_n,\delta_n}.A_{\delta_n} \nabla u_{\e_n,\delta_n}
\end{split}\end{equation*}
and
\begin{equation*}\begin{split}
\int_{\Gamma_n^i(R)} &\nabla\frac{ u_{\e_n,\delta_n}}{|u_{\e_n,\delta_n|}}.A_{\delta_n} \nabla\frac{ u_{\e_n,\delta_n}}{|u_{\e_n,\delta_n|}} -
\int_{\Gamma_n^i(R)} \nabla u_{\e_n,\delta_n}.A_{\delta_n} \nabla u_{\e_n,\delta_n} \\
&\leq 4M\int_{\Gamma_n^i(R)} \big(1-|u_n|^2\big)\,|\nabla u_n|^2 \\
& \leq 4M\,||1-|u_n|^2||_{L^2(\Omega)}\,||\nabla u_n||_{L^4(\Gamma_n^i(R)}^2\,.
\end{split}\end{equation*}

By (\ref{Euler}) and Lemma \ref{en}, one has $||\div\,\big(A_\delta \nabla\un\big)||_2 \leq \sqrt{4\pi Md}/\e_n$\,. Together with (\ref{majmu}), (\ref{estellAdelta}) and the fact that $\e_n=o(\delta_n^2)$, it implies easily $||\Delta \un||_2 \leq C/\e_n$ for some constant $C$ not depending on $n$.

By Property 2. of Lemma \ref{en}, one has $||1-|u_n|^2||_{L^2}\leq C\e_n$\,; and by the interpolation inequality $||\nabla u||_4^2 \leq ||\Delta u||_2\,||u||_\infty$, one has
$||\nabla u_n||_{L^4}^2\leq \sqrt{C}/\e_n$. Hence the result.

\end{proof}

\bigskip

\section{Energy of $S^1$-valued maps on an annulus\,: \\ General properties}

\bigskip

At this point, begins the second (and more difficult) part of this paper. It consists in comparing the energy of $u_{\e_n,\delta_n}$ on an annulus $\Gamma_n^i(R)$, with the minimal energy of a $S^1$-valued map on the same annulus with the same degree. As the degree of the boundary value of an $H^1$-map is not continuous under weak $H^1$-limits, we shall restrict ourselves to a class of $S^1$-values $H^1$-maps where this difficulty is naturally overcome.

Note that $u_{\e_n,\delta_n}$ is of class $H^2$ (by (\ref{Euler})), and consequently, one can write, on an annulus $\Gamma_n^i(R)$ of Proposition \ref{usuru},
$$\frac{u_{\e_n,\delta_n}}{|u_{\e_n,\delta_n}|}(x)=
e^{if(r,\theta)}\,\text{for}\, x=a_i+(r\cos\theta,r\sin\theta)\,,\;
r\in [\lambda\e_n,R], \theta\in [0,2\pi]$$
with $f\in H^2([\alpha,\beta]\times[0,2\pi],\R)$ and $f(r,2\pi)=f(r,0)+2\pi\kappa_i(n)$ for some $\kappa_i(n)\in\Z$ depending only on $a_i$ and $n$\,.

 We shall restrict ourselves to a class $\mathcal V_\kappa$ of  $S^1$-valued map of degree $\kappa\in\Z$ sharing a similar property.

\medskip In this section, we shall consider a field $$\R^2\ni x \to B(x)\in M_2(\R)$$ of symmetric positive definite
$2\times 2$-matrices, of class $W^{2\infty}$, with\\ $spectrum(B(x))\subset [m,M]$ for some $0<m<M$ not depending of $x$. The notation $\displaystyle
\widetilde B(r,\theta)=
\begin{pmatrix} \widetilde B_{rr}(r,\theta) & \widetilde B_{r\theta}(r,\theta) \\ \widetilde B_{r\theta}(r,\theta) & \widetilde B_{\theta\theta}(r,\theta)
\end{pmatrix}
$ will stand for $B(r\cos\theta,r\sin\theta)$, written in the orthonormal basis $(\partial/\partial r, 1/r\,\partial/\partial \theta)$\,.

\medskip
Let us
 introduce the following notations\,:
\begin{notations}\label{defdegkappa} For  $\alpha,\beta$, $0<\alpha<\beta$, and $\kappa\in \Z$\,, define
$$\Gamma(\alpha,\beta)=\{x\in\R^2\,/\, \alpha\leq |x| \leq \beta\}\,;$$
\begin{equation*}\begin{split}
\mathcal V_\kappa(\alpha,\beta)=\Big\{ v\in H^1(\Gamma(\alpha,\beta&),S^1)\, \Big / \,\exists \, f\in H^1([\alpha,\beta]\times [0,2\pi],\R) \text{ s.t.}\\
f(r,2&\pi)=f(r,0)+2\kappa \pi \,,\,\forall \,r\in[\alpha,\beta]\text{ a.s. and }
\\ &v(r,\theta)=e^{if(r,\theta)}\,,\, \alpha\leq r\leq \beta\,,\, 0\leq \theta\leq 2\pi \Big\}\,;
\end{split}\end{equation*}

\smallskip
\begin{equation*}\begin{split}
\mu(B,\alpha,\beta,\kappa)
= \inf\, \displaystyle \int_{\Gamma(\alpha,\beta)} \nabla v(x).B(x)\,\nabla v(x)\,dx\,,\; v\in \mathcal V_\kappa (\alpha,\beta)\,.
\end{split}\end{equation*}

\end{notations}

\medskip The first properties of the $\mu(B,\alpha,\beta,\kappa)$ are more or less obvious\,:
\begin{lem}\label{kappa} One has $\mu(B,\alpha,\beta,\kappa)=\kappa^2 \mu(B,\alpha,\beta,1)$.
\end{lem}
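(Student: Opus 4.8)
The plan is to exhibit an explicit bijection between the classes $\mathcal V_\kappa(\alpha,\beta)$ and $\mathcal V_1(\alpha,\beta)$ that multiplies the Dirichlet-type energy by exactly $\kappa^2$. The natural candidate is the map sending $v = e^{if}$ to $\tilde v = e^{if/\kappa}$ for $\kappa \neq 0$. First I would check that this is well-defined: if $f \in H^1([\alpha,\beta]\times[0,2\pi],\R)$ satisfies $f(r,2\pi) = f(r,0) + 2\kappa\pi$, then $g := f/\kappa$ lies in $H^1$ and satisfies $g(r,2\pi) = g(r,0) + 2\pi$, so $\tilde v = e^{ig} \in \mathcal V_1(\alpha,\beta)$; conversely, given $g$ realizing a map in $\mathcal V_1$, the function $f = \kappa g$ realizes a map in $\mathcal V_\kappa$, and the two operations are mutually inverse. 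One subtlety worth a sentence: the phase $f$ representing a given $v$ is unique only up to an additive constant in $2\pi\Z$ (since $v = e^{if}$ determines $f$ modulo $2\pi$ once we fix, say, the value $f(\alpha,0)$, and the monodromy condition is then forced), but this ambiguity is harmless because it does not affect $\nabla f$, hence does not affect $\nabla \tilde v$.

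Next I would carry out the energy computation. For $v = e^{if}$ one has $\nabla v = i\,e^{if}\,\nabla f$, so pointwise $\nabla v(x)\cdot B(x)\nabla v(x) = \nabla f(x)\cdot B(x)\nabla f(x)$ (the factor $i\,e^{if}$ has modulus one and contributes $\overline{i e^{if}}\,(i e^{if}) = 1$; since $B$ is real symmetric and $\nabla f$ real, the quadratic form is real and equals $\nabla f \cdot B\nabla f$). Likewise $\nabla \tilde v \cdot B \nabla \tilde v = \nabla(f/\kappa)\cdot B \nabla(f/\kappa) = \kappa^{-2}\,\nabla f \cdot B\nabla f$. Integrating over $\Gamma(\alpha,\beta)$ gives
\begin{equation*}
\int_{\Gamma(\alpha,\beta)} \nabla v \cdot B\,\nabla v = \kappa^2 \int_{\Gamma(\alpha,\beta)} \nabla \tilde v \cdot B\,\nabla \tilde v\,.
\end{equation*}
Since $v \mapsto \tilde v$ is a bijection from $\mathcal V_\kappa(\alpha,\beta)$ onto $\mathcal V_1(\alpha,\beta)$, taking the infimum over $v \in \mathcal V_\kappa(\alpha,\beta)$ on the left equals $\kappa^2$ times the infimum over $\tilde v \in \mathcal V_1(\alpha,\beta)$ on the right, which is precisely $\mu(B,\alpha,\beta,\kappa) = \kappa^2\,\mu(B,\alpha,\beta,1)$.

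Finally I would dispose of the degenerate case $\kappa = 0$ separately: $\mathcal V_0(\alpha,\beta)$ contains the constant map $v \equiv 1$ (take $f \equiv 0$), which has zero energy, and the energy is nonnegative by ellipticity of $B$, so $\mu(B,\alpha,\beta,0) = 0 = 0^2\cdot\mu(B,\alpha,\beta,1)$. I do not anticipate a serious obstacle here; the only point requiring any care is the well-definedness and $H^1$-regularity of the phase representative and the bijectivity of the rescaling map, which is why I would spell those out rather than the trivial pointwise algebra. Note in particular that no information about the geometry $B$ — beyond symmetry and the spectral bounds used to get nonnegativity — enters the argument, and in fact the same proof works verbatim with $\widetilde B$ in polar coordinates.
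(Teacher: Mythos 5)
Your proof is correct and is essentially the paper's own argument: the paper likewise reduces to the phase representation (its formula for $\mu$ in terms of the classes $\mathcal D_\kappa$), observes that $f\in\mathcal D_\kappa$ iff $f/\kappa\in\mathcal D_1$, uses the degree-two homogeneity of the quadratic form, and treats $\kappa=0$ by taking a constant map. You merely spell out the well-definedness details that the paper leaves implicit.
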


\smallskip
\begin{lem}\label{estmu0} One has
\begin{equation}\label{estmu}
2m\pi \kappa^2 \Log\left(\frac{\beta}{\alpha}\right) \leq \mu(B,\alpha,\beta,\kappa) \leq 2M\pi \kappa^2 \Log\left(\frac{\beta}{\alpha}\right)\,.
\end{equation}
\end{lem}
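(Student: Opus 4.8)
\textbf{Proof proposal for Lemma \ref{estmu0}.}

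The plan is to obtain both bounds by squeezing the quadratic form $\nabla v\cdot B\,\nabla v$ between $m|\nabla v|^2$ and $M|\nabla v|^2$ using (\ref{mM}) (applied to $B$, since $B$ satisfies the same spectral hypothesis), thereby reducing the problem to the case $B=\mathrm{Id}$, and then computing $\mu(\mathrm{Id},\alpha,\beta,\kappa)$ explicitly. By Lemma \ref{kappa} it suffices to treat $\kappa=1$; alternatively one can keep $\kappa$ and carry it through, since the computation for $B=\mathrm{Id}$ is elementary for any $\kappa$.

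First I would record that for every $v\in\mathcal V_\kappa(\alpha,\beta)$,
\begin{equation*}
m\int_{\Gamma(\alpha,\beta)}|\nabla v|^2\,dx\;\leq\;\int_{\Gamma(\alpha,\beta)}\nabla v\cdot B\,\nabla v\,dx\;\leq\;M\int_{\Gamma(\alpha,\beta)}|\nabla v|^2\,dx\,,
\end{equation*}
so that $m\,\mu(\mathrm{Id},\alpha,\beta,\kappa)\leq \mu(B,\alpha,\beta,\kappa)\leq M\,\mu(\mathrm{Id},\alpha,\beta,\kappa)$, the infimum being taken over the same class $\mathcal V_\kappa(\alpha,\beta)$ in all three cases. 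Then I would compute $\mu(\mathrm{Id},\alpha,\beta,\kappa)$. Writing $v=e^{if}$ with $f\in H^1([\alpha,\beta]\times[0,2\pi])$ and $f(r,2\pi)=f(r,0)+2\kappa\pi$, one has $|\nabla v|^2=|\nabla f|^2=|\partial_r f|^2+\tfrac1{r^2}|\partial_\theta f|^2$ a.e., hence in polar coordinates
\begin{equation*}
\int_{\Gamma(\alpha,\beta)}|\nabla v|^2\,dx=\int_\alpha^\beta\!\!\int_0^{2\pi}\Big(|\partial_r f|^2+\tfrac1{r^2}|\partial_\theta f|^2\Big)\,r\,d\theta\,dr\;\geq\;\int_\alpha^\beta\frac1r\Big(\int_0^{2\pi}|\partial_\theta f|^2\,d\theta\Big)dr\,.
\end{equation*}
For a.e. fixed $r$, $\theta\mapsto f(r,\theta)$ runs from $f(r,0)$ to $f(r,0)+2\kappa\pi$, so by Cauchy–Schwarz $\int_0^{2\pi}|\partial_\theta f|^2\,d\theta\geq \tfrac1{2\pi}\big(\int_0^{2\pi}\partial_\theta f\,d\theta\big)^2=\tfrac1{2\pi}(2\kappa\pi)^2=2\pi\kappa^2$. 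Integrating $\tfrac1r$ over $[\alpha,\beta]$ gives the lower bound $\mu(\mathrm{Id},\alpha,\beta,\kappa)\geq 2\pi\kappa^2\Log(\beta/\alpha)$. For the matching upper bound, the candidate $f(r,\theta)=\kappa\theta$, i.e. $v(r,\theta)=e^{i\kappa\theta}=(x/|x|)^\kappa$, lies in $\mathcal V_\kappa(\alpha,\beta)$ and has $\int_{\Gamma(\alpha,\beta)}|\nabla v|^2=\int_\alpha^\beta\int_0^{2\pi}\tfrac{\kappa^2}{r^2}\,r\,d\theta\,dr=2\pi\kappa^2\Log(\beta/\alpha)$, so $\mu(\mathrm{Id},\alpha,\beta,\kappa)=2\pi\kappa^2\Log(\beta/\alpha)$. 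Combining with the two-sided comparison above yields (\ref{estmu}).

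I do not expect a serious obstacle here; the only points requiring a little care are the a.e. identity $|\nabla v|^2=|\nabla f|^2$ for $v=e^{if}\in H^1$ (standard, since $f$ may be taken in $H^1$ by the definition of $\mathcal V_\kappa$), and the fact that the lifting $f$ for the test map is genuinely in $H^1$ with the prescribed jump — both are immediate for $f(r,\theta)=\kappa\theta$. One should also note that the lower-bound argument in fact shows the infimum is attained (by $e^{i\kappa\theta}$) when $B=\mathrm{Id}$, though this is not needed for the statement.
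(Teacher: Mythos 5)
Your proposal is correct and follows essentially the same route as the paper: the lower bound via the spectral inequality $\nabla v\cdot B\nabla v\geq m|\nabla v|^2$ combined with Cauchy--Schwarz on $\int_0^{2\pi}\partial_\theta f\,d\theta=2\kappa\pi$, and the upper bound via the test map $f(r,\theta)=\kappa\theta$ together with $\nabla v\cdot B\nabla v\leq M|\nabla v|^2$. The only cosmetic difference is that you package the argument as an exact computation of $\mu(\mathrm{Id},\alpha,\beta,\kappa)$ followed by the two-sided comparison, whereas the paper applies the two bounds directly to $\mu(B,\alpha,\beta,\kappa)$; the content is identical.
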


Before proving those two lemmas, let us introduce some additional notations\,:
\begin{notations}
$$\Gamma'(\alpha,\beta)=[\alpha,\beta]\times[0,2\pi]\,;$$
\begin{equation*}\begin{split}
\mathcal D_\kappa(\alpha,\beta)=\big\{f=f(r&,\theta)\in\, H^1(\Gamma'(\alpha,\beta),\R)\,/\,  \\
&f(r,2\pi)=f(r,0)+2\kappa \pi\,,\;r\in[\alpha,\beta] \text{ a.s.} \big\}\,;
\end{split}\end{equation*}
$$Df(r,\theta)=\begin{pmatrix}\displaystyle \frac{\partial f}{\partial r}\\ \\ \displaystyle \frac{1}{r}\frac{\partial f}{\partial \theta}\end{pmatrix}\,,\; f\in H^1(\Gamma'(\alpha,\beta),\R)\,.$$
\end{notations}

\smallskip
Note that, for $v(r,\theta)=e^{if(r,\theta)}\in \mathcal V_\kappa$ as above, one has
\begin{equation}
\int_{\Gamma(\alpha,\beta)}\nabla v(x).B(x)\nabla v(x)\,dx = \int_{\Gamma'(\alpha,\beta)}Df(r,\theta). \widetilde B(r,\theta)Df(r,\theta)\,rdrd\theta\,,
\end{equation}
so that
 \begin{equation}\label{mukappa1}\begin{split}
\mu(B,\alpha,\beta,\kappa)&=\inf\, \displaystyle \int_{\Gamma'(\alpha,\beta)} Df.\widetilde B(r,\theta)\,Df\,rdrd\theta\,,\; f\in \mathcal D_\kappa (\alpha,\beta)
\end{split}\end{equation}

\medskip\noindent {\it Proof of Lemma \ref{kappa}.} For $\kappa=0$, take $v$ a constant function. Otherwise, note that
$f\in \mathcal D_\kappa(\alpha,\beta)$ if and only if $\displaystyle \frac{1}{\kappa}f \in \mathcal D_1(\alpha, \beta)$\,, and apply (\ref{mukappa1}).

\medskip\noindent {\it Proof of Lemma \ref{estmu0}.} For $f\in \mathcal D_\kappa(\alpha,\beta)$, one has $\displaystyle \int_0^{2\pi} \frac{\partial f(r,\theta)}{\partial \theta}d\theta=2\kappa \pi$, which, by Cauchy-Schwartz, implies $\displaystyle \int_0^{2\pi} \left(\frac{\partial f(r,\theta)}{\partial \theta}\right)^2d\theta\geq 2\pi \kappa^2$. Hence
\begin{equation*}\begin{split}
\int_{\Gamma'(\alpha,\beta)}Df.\widetilde B(r,\theta)Df \,rdrd\theta &\geq m\int_{\Gamma'(\alpha,\beta)} |Df(r,\theta)|^2 rdrd\theta \\
 &\geq m \int_\alpha^\beta \frac{dr}{r}\int_0^{2\pi}  \left(\frac{\partial f(r,\theta)}{\partial \theta}\right)^2 d\theta \\
 &\geq 2m\pi\kappa^2 \Log\left( \frac{\beta}{\alpha}\right)
\end{split}\end{equation*}
which provides the first inequality.

\smallskip
Taking $f(r,\theta)=\kappa \theta$ provides the second inequality.

\hfill $\square$

\medskip
\begin{prop}\label{minimiseur} Let $\alpha,\beta,\kappa$ be fixed.

1/ There exists  $v_{\alpha,\beta}\in \mathcal V_\kappa(\alpha,\beta)$ such that $\displaystyle \int_{\Gamma(\alpha,\beta)} \nabla v_{\alpha,\beta}(x). B(x)\nabla v_{\alpha,\beta}(x)dx =\mu(B,\alpha,\beta,\kappa)
$\,.

\smallskip
2/ Such a minimizer $v_{\alpha,\beta}$ is unique, up to a multiplicative modulus $1$ constant, and is of class $H^2$.

\smallskip
3/  The Euler-Lagtange equation satisfied by $v_{\alpha,\beta}$ is the equation of $S^1$-valued harmonic functions
\begin{equation}\label{Eulerv}
-div\,\big(B(x)\nabla v(x)\big) = v(x)\,\big( \nabla v(x)\cdot B(x)\nabla v(x)\big)\,,
\end{equation}
together with boundary conditions of Neumann type
\begin{equation}\label{bordv1}
\frac{\partial v(\alpha,\theta) }{\partial r}= \frac{\partial v(\beta,\theta) }{\partial r}=0\,,\; \forall\,\theta\in [0,2\pi] \text{ a.s.}\,.
\end{equation}

\end{prop}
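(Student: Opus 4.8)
The three assertions are proved by the classical direct method together with elliptic regularity and a first-variation computation, so I would organize the argument accordingly.

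\emph{Part 1 (existence of a minimizer).}
I would work with the equivalent formulation \eqref{mukappa1}, i.e.\ minimize $F(f)=\int_{\Gamma'(\alpha,\beta)}Df\cdot\widetilde B(r,\theta)Df\,rdrd\theta$ over $f\in\mathcal D_\kappa(\alpha,\beta)$. Take a minimizing sequence $(f_j)$; by Lemma~\ref{estmu0} the values $F(f_j)$ are bounded, and since $spectrum(\widetilde B)\subset[m,M]$ and $r\in[\alpha,\beta]$ is bounded away from $0$ and $\infty$, this bounds $\|Df_j\|_{L^2(\Gamma'(\alpha,\beta))}$. The only subtlety is gauge/additive-constant invariance: $F(f_j)=F(f_j+c)$ for any constant $c$, so to get a bound on $f_j$ itself I normalize, say by $\int_{\Gamma'(\alpha,\beta)}f_j=0$, and then invoke the Poincar\'e--Wirtinger inequality on the rectangle $\Gamma'(\alpha,\beta)$ to bound $\|f_j\|_{H^1}$. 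Extract a weakly convergent subsequence $f_j\rightharpoonup f_\infty$ in $H^1(\Gamma'(\alpha,\beta),\R)$. The affine constraint $f(r,2\pi)=f(r,0)+2\kappa\pi$ is preserved under weak $H^1$-convergence because the trace operator onto the two edges $\{\theta=0\}$ and $\{\theta=2\pi\}$ is weakly continuous from $H^1$ into $L^2$; hence $f_\infty\in\mathcal D_\kappa(\alpha,\beta)$. Finally $F$ is a nonnegative quadratic form in $Df$ with bounded measurable coefficients, hence weakly lower semicontinuous, so $F(f_\infty)\le\liminf F(f_j)=\mu(B,\alpha,\beta,\kappa)$, and $f_\infty$ is a minimizer. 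Setting $v_{\alpha,\beta}=e^{if_\infty}$ gives the claimed element of $\mathcal V_\kappa(\alpha,\beta)$.

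\emph{Part 3 (Euler--Lagrange equations).}
I would do this before Part~2, since uniqueness and regularity lean on the equation. Given the minimizer $f_\infty$, for $\varphi\in H^1(\Gamma'(\alpha,\beta),\R)$ that is $2\pi$-periodic in $\theta$ (so that $f_\infty+t\varphi$ stays in $\mathcal D_\kappa$), the function $t\mapsto F(f_\infty+t\varphi)$ has a minimum at $t=0$, giving
\begin{equation*}
\int_{\Gamma'(\alpha,\beta)} Df_\infty\cdot\widetilde B(r,\theta)\,D\varphi\;rdrd\theta=0
\end{equation*}
for all such $\varphi$. In Cartesian variables on $\Gamma(\alpha,\beta)$ this says $\int_{\Gamma(\alpha,\beta)}\nabla f\cdot B\nabla\varphi=0$, i.e.\ $f$ is weakly $B$-harmonic on the open annulus with homogeneous co-normal (Neumann) data $B\nabla f\cdot n=0$ on $|x|=\alpha$ and $|x|=\beta$; translating to $v=e^{if}$ via $\nabla v=iv\nabla f$ and $-div(B\nabla v)=iv\,div(B\nabla f)+v\,(\nabla f\cdot B\nabla f)=v(\nabla v\cdot B\nabla v)$ yields exactly \eqref{Eulerv}, and the condition $\partial v/\partial r=0$ on the two boundary circles follows from the co-normal condition together with the fact that $\partial/\partial r$ is the outward co-normal direction up to a factor involving only $\widetilde B_{rr},\widetilde B_{r\theta}$ (here one uses that $v$ is $S^1$-valued so $\partial v/\partial r = iv\,\partial f/\partial r$, and the scalar Neumann condition forces $\partial f/\partial r=0$ on the edges). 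This is the step I expect to require the most care: one must justify the natural boundary condition rigorously, which means upgrading regularity enough for the boundary trace of $\partial f/\partial r$ to make sense.

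\emph{Regularity and Part~2 (uniqueness up to a constant).}
For interior regularity, $f$ solves the linear uniformly elliptic equation $div(B\nabla f)=0$ with $W^{2,\infty}$ (in particular Lipschitz) coefficients, so by standard Schauder/$L^p$ elliptic theory $f\in H^2_{loc}$, and since the annulus is smooth and the Neumann data is $0$ (and the coefficients are regular up to the boundary), boundary regularity gives $f\in H^2(\Gamma(\alpha,\beta))$; hence $v=e^{if}\in H^2$ as claimed. For uniqueness, suppose $f^{(1)},f^{(2)}$ are two minimizers in $\mathcal D_\kappa$; then $f^{(1)}-f^{(2)}$ is $2\pi$-periodic in $\theta$, so it is an admissible variation for \emph{both}, and by strict convexity of the quadratic form $f\mapsto F(f)$ on the affine space $\mathcal D_\kappa$ modulo constants — strictness coming from $\widetilde B\ge mI>0$ — any two minimizers differ by a constant $c$, i.e.\ $f^{(1)}=f^{(2)}+c$, hence $v^{(1)}=e^{ic}v^{(2)}$. (Equivalently: $F(f^{(1)})=F(f^{(2)})=F\big(\tfrac12(f^{(1)}+f^{(2)})\big)$ forces $D(f^{(1)}-f^{(2)})=0$ a.e.) This completes all three parts.
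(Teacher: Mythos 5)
Your proposal is correct and follows essentially the same route as the paper: reduce to minimizing the quadratic form over $\mathcal D_\kappa(\alpha,\beta)$ via (\ref{mukappa1}), apply the direct method (the paper simply invokes weak closedness of the affine space $\mathcal D_\kappa$ and weak lower semicontinuity, while you spell out the normalization and Poincar\'e--Wirtinger step it glosses over), derive the interior equation and the natural Neumann conditions by first variation against $\mathcal D_0$, get $H^2$ from linear elliptic regularity, and obtain uniqueness up to a constant from strict convexity (the paper's orthogonality expansion $F(f_2)=F(f_1)+F(f_2-f_1)$ is the same argument in a different guise). The only substantive detail the paper records that you omit is the matching condition $\partial_\theta f(r,2\pi)=\partial_\theta f(r,0)$ across the cut, needed when one works on the rectangle rather than directly on the annulus.
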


\begin{proof} By (\ref{mukappa1}), the problem consists in studying minimizers in $\mathcal D_\kappa(\alpha,\beta)$ of the quadratic form $\displaystyle \int_{\Gamma'(\alpha,\beta)}Df(r,\theta)\cdot \widetilde B(r,\theta)Df(r,\theta)rdrd\theta$\,.

1/ For the existence  of a minimizer $f_{\alpha,\beta}$, it is enough to note that $\mathcal D_\kappa$ is a closed affine subspace of $H^1(\Gamma'(\alpha,\beta),\R)$ (directed by $\mathcal D_0(\alpha,\beta)$\,, which is a closed vector subspace), hence weakly closed, and that the quadratic form to minimize on $\mathcal D_\kappa$ is l.s.c. for the weak $H^1$-topology.

\smallskip 2/ and 3/ The Euler-Lagrange Equation satisfied by $f=f_{\alpha,\beta}$ is
\begin{equation}\label{orthD0}
\int_{\Gamma'(\alpha,\beta}Dg(r,\theta).\widetilde B(r,\theta)Df(r,\theta)\,rdrd\theta=0\,,\; \forall\, g \in \mathcal D_0(\alpha,\beta)\,.
\end{equation}

Taking $g\in C^{\infty}_c(]\alpha,\beta[\times ]0,2\pi[)$ ($C^{\infty}$-functions with compact support) provides
\begin{equation}\label{Eulerf}
-div_D\big(\widetilde B(r,\theta)Df(r,\theta)\big)=0\,,
\end{equation}
where $-div_D$ is the formal adjoint of $D$, i.e.
$$div_D(X)=\frac{1}{r}\frac{\partial (rX_r)}{\partial r}+ \frac{1}{r} \frac{\partial X_\theta}{\partial \theta}\,,\; X\in H^1(\Gamma'(\alpha,\beta),\R^2)\,.
$$
The corresponding equation for $v\,:\;v(r,\theta)=e^{if(r,\theta)}$ is (\ref{Eulerv}).

\smallskip Taking $g\in C^{\infty}_c([\alpha,\beta]\times ]0,2\pi[)$, integrating by parts and invoking (\ref{Eulerf}) provides
\begin{equation}
\frac{\partial f(\alpha,\theta) }{\partial r}= \frac{\partial f(\beta,\theta) }{\partial r}=0\,,\; \forall\,\theta\in [0,2\pi] \text{ a.s.}\,,
\end{equation}
hence (\ref{bordv1}).

Taking  $g\in C^{\infty}_c(]\alpha,\beta[\times [0,2\pi])$ and again invoking (\ref{Eulerf}) provides
\begin{equation}\label{bordf2}
\frac{\partial f(r,2\pi)}{\partial \theta}=\frac{\partial f(r,0)}{\partial \theta}\,,\; \forall \, r\in [\alpha,\beta] \text{ a.s.}\,.
\end{equation}
\smallskip (\ref{Eulerf}) implies that $f_{\alpha,\beta}$ is of class $H^2$. This fact, together with (\ref{bordf2}), will imply that $v$ is of class $H^2$.
\medskip

For the uniqueness of $v$ up to a multiplicative constant, note that, if $f_1$ and $f_2$ are two minimizers in $\mathcal D_\kappa(\alpha,\beta)$, one has, setting $g=f_2-f_1$ and invoking (\ref{orthD0})\,:
\begin{equation*}\begin{split}
\int_{\Gamma'(\alpha,\beta)}&Df_1.\widetilde B(r,\theta)Df_1 \,rdrd\theta = \int_{\Gamma'(\alpha,\beta)}Df_2.\widetilde B(r,\theta)Df_2 \,rdrd\theta \\
&=\int_{\Gamma'(\alpha,\beta)}Df_1.\widetilde B(r,\theta)Df_1 \,rdrd\theta + \int_{\Gamma'(\alpha,\beta)}Dg.\widetilde B(r,\theta)Dg \,rdrd\theta \\
&\geq \int_{\Gamma'(\alpha,\beta)}Df_1.\widetilde B(r,\theta)Df_1 \,rdrd\theta+m\int_{\Gamma'(\alpha,\beta)} |Dg|^2 rdrd\theta
\end{split}\end{equation*}
which is possible if and only if $Dg=0$, i.e. iff $g=f_2-f_1$ is a constant function.

\end{proof}

\section{Energy of $S^1$-valued maps on an annulus\,: \\ behaviour at the boundary}

\bigskip Estimates on the value of the minimal energy $\mu(B,\alpha,\beta,\kappa)$ could be provided, but the main feature here is that such estimates are not needed when one seeks only to prove the existence  of a limit configuration for the $\un$. What is actually needed is the existence of some ''approximate minimizers'' for which the value at the boundary of the annulus is prescribed. As it will appear in the proof, any a priori value could have been prescribed. But we shall limit our study to the standard boundary condition of degree $\kappa$, namely $\theta\to e^{i\kappa \theta}$.

\smallskip
\begin{thm}\label{boundval} Fix $m$ and $M$, $0<m<M$\,. There exists a constant $C$, depending only on $m$ and $M$, such that, for any field $B(.)$ of symmetric $2\times 2$-matrices with $spec(B(x)\subset [m,M]$, $\forall\,x\in \R^2$, and for any
$\alpha,\beta,\kappa$\,, one can find $v\in \mathcal V_\kappa(\alpha,\beta)$ with the two properties\,:
\begin{itemize}
\item [1/] $\displaystyle \int_{\Gamma(\alpha,\beta)} \nabla v(x)\cdot B(x)\nabla v(x) \leq \mu(B,\alpha,\beta,\kappa)+\kappa^2C\,$:

\smallskip
\item [2/] $\exists \,\theta_0$\,,
$v(\beta,\theta)=e^{i\kappa \theta}\,,\; v(\alpha,\theta)=e^{i\kappa(\theta+\theta_0)}$, $\theta\in [0,2\pi]\,. $
\end{itemize}
\end{thm}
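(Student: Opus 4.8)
The plan is first to reduce to $\kappa=1$. If $v=e^{if}$ realises the conclusion for $\kappa=1$, then $e^{i\kappa f}$ realises it for general $\kappa$: the energy is multiplied by $\kappa^2$, the data $e^{i\theta},e^{i(\theta+\theta_0)}$ become $e^{i\kappa\theta},e^{i\kappa(\theta+\theta_0)}$, and by Lemma \ref{kappa} the bound $\mu(B,\alpha,\beta,\kappa)+\kappa^2C=\kappa^2\big(\mu(B,\alpha,\beta,1)+C\big)$ is respected; the case $\kappa=0$ is trivial with $v$ constant. Next I pass to logarithmic polar coordinates $(t,\theta)=(\log(r/\alpha),\theta)$: as in \eqref{mukappa1} one rewrites the energy as $\int_{\Gamma'(\alpha,\beta)}Df\cdot\widetilde B\,Df\,r\,dr\,d\theta$, and the substitution $t=\log(r/\alpha)$ removes the weight $r\,dr$, so that $\mu(B,\alpha,\beta,1)$ becomes the infimum of $\int_{Q}\nabla f\cdot\widehat B\,\nabla f$ over $f\in H^1(Q)$ with $f(\cdot,2\pi)=f(\cdot,0)+2\pi$, where $Q=[0,L]\times[0,2\pi]$, $L=\log(\beta/\alpha)$, and $\widehat B(t,\theta)=\widetilde B(\alpha e^{t},\theta)$ still has spectrum in $[m,M]$; prescribing the standard boundary data becomes $\partial_\theta f(0,\cdot)=\partial_\theta f(L,\cdot)=1$. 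If $L\le 1$ take $f\equiv\theta$, whose energy is $\le 2\pi ML\le 2\pi M$, so one may assume $L>1$.

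For $L>1$, let $f_0$ be the minimiser of Proposition \ref{minimiseur}, so $\int_Q\nabla f_0\cdot\widehat B\nabla f_0=\mu:=\mu(B,\alpha,\beta,1)$, $f_0$ is $H^2$, and $f_0$ satisfies the orthogonality relation \eqref{orthD0} and the Neumann conditions at $t=0,L$. I would pick radii $t_-\in(0,1]$, $t_+\in[L-1,L)$ at which $f_0(t_\pm,\cdot)\in H^1(S^1)$ with $\int_0^{2\pi}|\partial_\theta f_0(t_\pm,\theta)|^2\,d\theta\le C(m,M)$, and set $\tilde f=f_0+\xi_-+\xi_+$, where $\xi_-$ is supported in $[0,t_-]\times[0,2\pi]$, vanishes at $t=t_-$, and is chosen so $\tilde f(0,\theta)=\theta+c_-$, and symmetrically $\xi_+$ is supported in $[t_+,L]\times[0,2\pi]$, vanishes at $t=t_+$, with $\tilde f(L,\theta)=\theta$. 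Each $\xi_\pm$ has zero winding, hence is an admissible direction in \eqref{orthD0}, so $\int_Q\nabla\xi_\pm\cdot\widehat B\nabla f_0=0$; since $\xi_-,\xi_+$ have disjoint supports this gives $\int_Q\nabla\tilde f\cdot\widehat B\nabla\tilde f=\mu+\int\nabla\xi_-\cdot\widehat B\nabla\xi_-+\int\nabla\xi_+\cdot\widehat B\nabla\xi_+$. Minimising over the free constants $c_\pm$ and over the $\xi_\pm$ with prescribed traces, each term $\int\nabla\xi_\pm\cdot\widehat B\nabla\xi_\pm$ equals the minimal $\widehat B$-harmonic Dirichlet energy on the collar of a function equal to $0$ on one side and to $(\theta+c_\pm)-f_0(t_\pm,\cdot)$ on the other; this is $\le C_M\big\|f_0(t_\pm,\cdot)-\theta-\mathrm{mean}\big\|_{\dot H^{1/2}(S^1)}^2\le C(m,M)\big(1+\int_0^{2\pi}|\partial_\theta f_0(t_\pm,\theta)|^2\,d\theta\big)\le C'(m,M)$ by the Poincaré inequality on $S^1$. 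Undoing the two reductions, $v(r,\theta)=e^{i\kappa\tilde f(\log(r/\alpha),\theta)}$ lies in $\mathcal V_\kappa(\alpha,\beta)$, has the required boundary values with $\theta_0=c_-$, and satisfies property 1 with $C=C'(m,M)$.

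The step I expect to be the main obstacle is precisely the selection of $t_\pm$ with the uniform control $\int_0^{2\pi}|\partial_\theta f_0(t_\pm,\cdot)|^2\le C(m,M)$ — equivalently, by ellipticity and Fubini, the uniform bound $\int_{L-1}^{L}\!\int_0^{2\pi}\nabla f_0\cdot\widehat B\nabla f_0\le C(m,M)$ on the energy the minimiser puts in the unit collar adjacent to a Neumann boundary. The estimate $\mu\le 2\pi ML$ of Lemma \ref{estmu0} only controls the average of this density over $[0,L]$, which is useless for large $L$; one has to show $f_0$ does not concentrate energy near $\partial Q$. Toward this I would use two comparison maps: reflecting $f_0$ across $t=L-1$ (the energy density being reflection-invariant and $\widehat B$ having spectrum in $[m,M]$) gives $\int_{L-1}^{L}\nabla f_0\cdot\widehat B\nabla f_0\le\tfrac{M}{m}\int_{L-2}^{L-1}\nabla f_0\cdot\widehat B\nabla f_0$; and replacing $f_0$ on $[L-1,L]$ by the $\widehat B$-harmonic extension of its trace at $t=L-1$ with free outer boundary, together with the trace inequality, bounds the last-collar energy by $C(m,M)$ times the second-to-last-collar energy plus a constant. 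Pushing these relations through (iterating dyadically against $\mu\le 2\pi ML$) to extract a genuinely $L$-independent bound is the technical heart of the proof, and the part I would expect to require the most care.
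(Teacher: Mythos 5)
Your reductions (to $\kappa=1$, to logarithmic coordinates on the cylinder $Q=[0,L]\times[0,2\pi]$ with $L=\log(\beta/\alpha)$, and the use of \eqref{orthD0} to kill the cross term when you graft the corrections $\xi_\pm$ onto the minimiser) are all sound, and you have correctly located the crux: everything hinges on finding slices $t_\pm$ \emph{inside the unit collars adjacent to $\partial Q$} at which $\int_0^{2\pi}|\partial_\theta f_0(t_\pm,\theta)|^2\,d\theta\le C(m,M)$. That step is not proved, and the devices you sketch do not close it. Writing $e_{[a,b]}$ for the energy of $f_0$ on $[a,b]\times[0,2\pi]$, the reflection comparison only yields $e_{[L-1,L]}\le \frac{M}{m}\,e_{[L-2,L-1]}$; even after hole-filling, which gives $e_{[L-2^j,L]}\le\frac{M}{M+m}\,e_{[L-2^{j+1},L]}$, iterating dyadically down to the scale $L$ and using $\mu\le 2\pi ML$ produces $e_{[L-1,L]}\le 2\pi M\,L^{1+\log_2(M/(M+m))}$, which is $L$-independent only when $M\le m$. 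For a general ellipticity ratio the exponent stays positive and the bound degenerates as $L\to\infty$. A genuinely uniform (in $B$ and $L$) non-concentration estimate at a Neumann end is in effect a boundary regularity statement with constants independent of the merely bounded, oscillating coefficients, and nothing at hand delivers it. (A secondary issue: if $t_-$ is allowed to be arbitrarily small, the minimal extension energy over the collar $[0,t_-]$ of a fixed nonzero $H^{1/2}$ trace blows up like $1/t_-$, so you would in any case need $t_-$ bounded away from $0$.)

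The paper's proof avoids ever needing good slices near the boundary, and this is the idea you are missing. It takes the minimiser $f_1$ on the \emph{smaller} annulus $\Gamma(2\alpha,\beta/2)$ (so that $\mu(B,2\alpha,\beta/2,1)\le\mu(B,\alpha,\beta,1)$) and defines the good set $J$ of radii where the angular slice energy of $f_1$ does not exceed that of the standard map $\theta\mapsto\theta$; $J\ne\emptyset$ by comparison with $g_1(r,\theta)=\theta$, but $r_1=\inf J$ and $r_2=\sup J$ may lie anywhere in $[2\alpha,\beta/2]$. The test map is then taken equal to the standard map on \emph{all} of $[2r_2,\beta]$ and $[\alpha,r_1/2]$, however long these regions are: by the very definition of $J$, on $[2r_2,\beta/2]$ and on the corresponding inner region the standard map's energy is dominated by that of $f_1$ there, while the leftover pieces $[\beta/2,\beta]$, $[\alpha,2\alpha]$ and the two gluing collars $[r_2,2r_2]$, $[r_1/2,r_1]$ (of fixed logarithmic width $\log 2$, with traces controlled by \eqref{r1r2}) each cost only a constant depending on $m,M$. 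You should either adopt this comparison-on-the-bad-set device or supply a genuine uniform boundary non-concentration estimate; as written, your argument is incomplete at its central step.
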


\begin{proof} Without lack of generality, we shall only consider pairs $(\alpha,\beta)$ such that $\beta >4 \alpha$ (the case $\beta\leq 4\alpha$ is solved by Lemma \ref{estmu0} and its proof, taking $v(r,\theta)=e^{i\kappa\theta}$\,).

By Lemma \ref{kappa}, we can restrict ourselves to the case $\kappa=1$ (if $v$ satisfies the conclusions of the Theorem for $\kappa=1$, then $v^{\kappa}$ will be a solution for a general $\kappa\in \Z$).

Let
$v_1=v_{2\alpha,\beta/2}$ be a minimizer in $\mathcal V_1(2\alpha,\beta/2)$ for $\mu(B,2\alpha,\beta/2,1)$ provided by Proposition \ref{minimiseur}, with $v_1(r,\theta)=e^{if_1(r,\theta)}$, $f\in \mathcal D_1(2\alpha,\beta/2)$. Set
\begin{equation}\begin{split}
J=\Big\{ r\in[2\alpha,\beta/2]\,\Big/ \, \int_0^{2\pi} Df_1(r,\theta)&\cdot \widetilde B(r,\theta)Df_1(r,\theta)d\theta \\
&\leq \frac{1}{r^2}\int_0^{2\pi} \widetilde B_{\theta \theta}(r,\theta)d\theta\,\Big\}\,.
\end{split}\end{equation}
 As $f_1$ is of class $H^2$ (cf. Proposition \ref{minimiseur}), $J$ is a closed subset of $[2\alpha,\beta/2]$.

 \smallskip We claim that $J\not=\emptyset$\,. If this was not true, one would define $g_1\in \mathcal D_1(2\alpha,\beta/2)$ by $g_1(r,\theta)=\theta$, and get, by definition of $J$\,:
\begin{equation}\begin{split}
\int_{\Gamma'(2\alpha,\beta/2)}&Dg_1(r,\theta)\cdot \widetilde B(r,\theta)Dg_1(r,\theta)d\theta \\
&= \int_{2\alpha}^{\beta/2}\frac{dr}{r}\int_0^{2\pi} \widetilde  B_{\theta\theta}(r,\theta)d\theta \\
& < \int_{2\alpha}^{\beta/2}rdr\int_0^{2\pi}Df_1(r,\theta)\cdot \widetilde B(r,\theta)Df_1(r,\theta)d\theta \\
&= \mu(B,2\alpha,\beta/2,1)\,,
\end{split}\end{equation}
which would contradict Formula (\ref{mukappa1}).

\smallskip So, we can define $r_1=\inf J$ and $r_2=\sup J$\,. Note that, for $r\in J$, one has
\begin{equation}\label{rJ}\begin{split}
\int_0^{2\pi} \left| \frac{\partial f_1(r,\theta)}{\partial \theta} \right|^2 d\theta & \leq
r^2 \int_0^{2\pi} |Df_1(r,\theta)|^2 d\theta \\
&\leq \frac{r^2}{m}\int_0^{2\pi} Df_1(r,\theta)\cdot \widetilde B(r,\theta)Df_1(r,\theta)d\theta \\
&\leq \frac{1}{m}\int_0^{2\pi} \widetilde B_{\theta,\theta}(r,\theta)d\theta\,.
\end{split}\end{equation}
In particular, as $m\leq \widetilde  B_{\theta,\theta}(r,\theta)\leq M$, one has
\begin{equation}\label{r1r2}
\int_0^{2\pi} \left| \frac{\partial f_1(r,\theta)}{\partial \theta} \right|^2 d\theta \leq \frac{2\pi M}{m}\;\text{ for } r=r_1 \text{ or } r=r_2\,.
\end{equation}

\smallskip As $v_{2\alpha,\beta/2}$ is defined up to a multiplicative modulus $1$ constant, we can suppose that $f_1(r_2,0)=0$.
With this choice of $f_1$, we set $\theta_0=f_1(r_1,0)$ and we define $f\in \mathcal D_1(\alpha,\beta)$ by the formula
\begin{equation}
f(r,\theta) = \left\{ \begin{matrix} \theta & \text{ if } 2r_2\leq r \leq \beta\,; \\ \\ \displaystyle \frac{2r_2-r}{r_2} f_1(r_2,\theta) + \frac{r-r_2}{r_2}\theta & \text{ if } r_2\leq r \leq 2r_2\,; \\ \\
f_1(r,\theta) &\text{ if } r_1\leq r \leq r_2 \,;\\ \\
 \displaystyle 2\frac{r_1-r}{r_1} f_1(r_1,\theta) + \frac{2r-r_1}{r_1}(\theta+\theta_0) & \text{ if } r_1/2\leq r \leq r_1\,; \\ \\
 \theta+\theta_0 &\text{ if } \alpha \leq r \leq r_1/2\,.
\end{matrix}\right.
\end{equation}
We will check that, with such an $f$, $v(r,\theta)=e^{if(r,\theta)}$ satisfies the conclusions of the theorem. Note that $v\in \mathcal V_1(\alpha,\beta)$ by construction.

\smallskip Let $\lambda_1$ be the first eigenvalue of the Laplacian in $H^1_0(\,]\,r_2,2r_2\,]\,)$ (which as a matter of fact does not depend on the choice of $r_2>0$). We have, for $r\in [r_2,2r_2]$\,:
\begin{equation}\label{estDf1}\begin{split}
\int_0^{2\pi} \left| \frac{\partial f(r,\theta)}{\partial r}\right|^2 d\theta &=\frac{1}{r_2^2} \int_0^{2\pi} \big| \theta - f_1(r_2,\theta) \big|^2 d\theta \\
&\leq \frac{\lambda_1}{r_2^2} \int_0^{2\pi} \left| 1-\frac{\partial f_1(r_2,\theta)}{\partial \theta} \right|^2 d\theta \\
&\leq \frac{2\lambda_1}{r_2^2} \int_0^{2\pi}\left(1+\left| \frac{\partial f_1(r_2,\theta)}{\partial \theta} \right|^2 \right) d\theta \\
&\leq \frac{4\pi \lambda_1}{r_2^2}\left( 1+\frac{M}{m}\right)  \,,
\end{split}\end{equation}
the last inequality being provided by (\ref{r1r2}).

We compute then, invoking again (\ref{r1r2})\,:
\begin{equation}\label{estDf2}\begin{split}
\int_0^{2\pi} \left| \frac{\partial f(r,\theta)}{\partial \theta}\right|^2 d\theta &=
\int_0^{2\pi} \left( \frac{2r_2-r}{r_2}\, \frac{ \partial f_1(r_2,\theta) }{\partial \theta}+ \frac{r-r_2}{r_2}\right)^2 d\theta \\
&\leq \int_0^{2\pi} \left( \frac{2r_2-r}{r_2} \left|\frac{ \partial f_1(r_2,\theta) }{\partial \theta}\right|^2+ \frac{r-r_2}{r_2}\right) d\theta \\
&\leq \frac{4\pi M}{m}\,.
\end{split}\end{equation}

Integrating  Inequalities (\ref{estDf1}) and (\ref{estDf2}) between $r_2$ and $2r_2$, we get
\begin{equation}\begin{split}
\int_{\Gamma'(r_2,2r_2)}& Df(r,\theta)\cdot \widetilde B(r,\theta) Df(r,\theta) \,rdrd\theta \\ & \leq M \int_{\Gamma'(r_2,2r_2)} \big|Df(r,\theta)\big|^2 rdrd\theta \\
&= M \int_{r_2}^{2r_2} rdr \int_0^{2\pi} \left| \frac{\partial f(r,\theta)}{\partial r}\right|^2 d\theta +
M \int_{r_2}^{2r_2} \frac{dr}{r} \int_0^{2\pi} \left| \frac{\partial f(r,\theta)}{\partial \theta}\right|^2 d\theta \\
&\leq 6\pi \lambda_1M \left( 1+\frac{m}{M}\right) + 4\pi \frac{M^2}{m}\Log 2\,.
\end{split}\end{equation}
Writing, for sake of simplicity
\begin{equation}
C_1(m,M)=6\pi \lambda_1M \left( 1+\frac{m}{M}\right) + 4\pi \frac{M^2}{m}\Log 2\,,
\end{equation}
a similar computation leads to
\begin{equation}\begin{split}
\int_{\Gamma'(r_1/2,r_1)} Df(r,\theta)\cdot \widetilde B(r,\theta) Df(r,\theta) \,rdrd\theta \leq C_1(m,M)\,.
\end{split}\end{equation}

\smallskip Suppose now that $2r_2< \beta/2$. We have then $]2r_2,\beta/2]\cap J =\emptyset$, which, by definition of $J$, will imply
\begin{equation}\begin{split}
\int_{\Gamma'(2r_2,\beta/2)} Df(r,\theta)& \cdot \widetilde B(r,\theta) Df(r,\theta)rdrd\theta = \int_{2r_2}^{\beta/2} \frac{dr}{r} \int_0^{2\pi} \widetilde B_{\theta\theta}(r,\theta)d\theta \\
&\leq  \int_{2r_2}^{\beta/2} rdr\int_0^{2\pi} Df_1(r,\theta)\cdot \widetilde B(r,\theta) Df_1(r,\theta)d\theta \\
&=\int_{\Gamma'(2r_2,\beta/2)} Df_1(r,\theta)\cdot \widetilde B(r,\theta) Df_1(r,\theta)d\theta\,,
\end{split}\end{equation}
while
\begin{equation}\begin{split}
\int_{\Gamma'(\beta/2,\beta)} Df(r,\theta) \cdot \widetilde B(r,\theta) Df(r,\theta)rdrd\theta &= \int_{\beta/2}^{\beta} \frac{dr}{r} \int_0^{2\pi} \widetilde B_{\theta\theta}(r,\theta)d\theta \\
&\leq M\Log 2\,.
\end{split}\end{equation}

In the case where $2r_2\geq \beta/2$, we write only
\begin{equation}\begin{split}
\int_{\Gamma'(2r_2,\beta)} Df(r,\theta) \cdot \widetilde B(r,\theta) Df(r,\theta)rdrd\theta &= \int_{2r_2}^{\beta} \frac{dr}{r} \int_0^{2\pi} \widetilde B_{\theta\theta}(r,\theta)d\theta \\
&\leq M\Log 2\,.
\end{split}\end{equation}

The same computations, with the same upper bounds, hold on $\Gamma'(\alpha,r_1/2)$ or on $\Gamma'(\alpha,2\alpha)\cup \Gamma'(2\alpha,r_1/2)$. Setting $\widetilde r_1=r_1$ if $r_1/2\geq 2\alpha$ (resp. $\widetilde r_1=4\alpha$ if $r_1/2<2\alpha$), and $\widetilde r_2=r_2$ if $2r_2\leq \beta/2$ (resp. $\widetilde r_2=\beta/4$ if $2r_2>\beta/2$), we get finally
\begin{equation}\begin{split}
\int_{\Gamma'(\alpha,\beta)}& Df(r,\theta) \cdot \widetilde B(r,\theta) Df(r,\theta)rdrd\theta \\
&\leq \int_{\Gamma'(2\alpha,\widetilde r_1/2)\cup \Gamma'(r_1,r_2)\cup\Gamma'(2\widetilde r_2,\beta/2)
} Df_1(r,\theta) \cdot \widetilde B(r,\theta) Df_1(r,\theta)rdrd\theta \\
&\qquad\qquad\qquad\qquad\qquad\qquad\quad
+ 2C_1(M,m)+2M\Log 2 \\
%& \!\!\!\!\!\!\!\! \!\!\!\!\!\!\!\! \!\!\!\!\!\!\!\! \!\!\!\!\!\!\!\! \!\!\!\!\!\!\!\! \!\!\!\!\!\!\!\! \!\!\!\!\!\!\!\! \!\!\!\!\!\!\!\! \!\!\!\!\!\!\!\! \!\!\!\!\!
&\leq \mu(B,2\alpha,\beta/2,1) + 2C_1(m,M)+2M\Log 2\,,
\end{split}\end{equation}
which implies the result.

\end{proof}

\newpage

\section{Existence of a limit configuration}

\bigskip
\subsection{Statement of the Theorem}\-

The data are those of subsection \ref{data}\,: the domain $\Omega$, the boundary condition $g:\Omega\to S^1$ of degree $d>0$, the $\Z^2$-periodic field $A(.)$ of $2\times 2$ positive definite matrices, of class $W^{2,\infty}$, with spectrum bounded above by $M$ and below by $m$ independently of $x$, and for every pair $(\e>0,\delta>0)$, a minimizer $u_{\e,\delta}$ of the Ginzburg-Landau energy $E_{\e,\delta}$ for the geometry provided by the field $\displaystyle A_\delta\,:\; A_\delta(x)=A\left( \frac{x}{\delta} \right)$ (cf. (\ref{defE}) ).

The whole section will be devoted to the proof of the following theorem, which states that (up to substituting a subsequence), for any decreasing  sequence $\delta_n$, there exists a sequence $\e_n$ such that the sequence $\un$ is locally bounded in some $H^1_{loc}$-space of $\Omega\backslash\{finite\;set\;of \; points\}$.

\smallskip
\begin{thm}\label{borneloc} The notations are those of subsection \ref{data}.

Let $\delta_n$ be a decreasing sequence in $\R^*_+$. Then, substituting to it a subsequence (still denoted $\delta_n$), one can find
\begin{itemize}
\item [$\cdot$] a sequence $\e_n$ tending to $0$\,,
\item [$\cdot$] a finite subset $\{a_1,\cdots,a_N\}$ of $\Omega$\,,
\end{itemize}
such that the sequence $\un$ is bounded in $H^1_{loc}\big(\Omega\backslash\{a_1,\cdots,a_N\}\big)$.

Which means that, for any $R>0$, one has
\begin{equation}
\sup_n \int_{\Omega \backslash (\,\cup_j B(a_j,R)\,)} |\nabla \un|^2 < +\infty\,.
\end{equation}

\end{thm}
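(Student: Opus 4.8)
The plan is to combine the local information near each singular point $a_i$ (Proposition \ref{usuru}) with the energy comparison machinery of Sections 4 and 5, via a standard upper/lower bound squeeze on the energy of $\un$ restricted to annuli. First I would fix $R>0$ (WLOG $R\le R_0$) and note that it suffices to bound $\sup_n\int_{\Gamma_n^i(R)}|\nabla\un|^2$ for each $i$, since away from the bad disks the map $\un$ has modulus bounded below and $\int_{\Omega\setminus(\cup_i B(a_i,R))}|\nabla\un|^2$ is controlled by the total energy (\ref{majmu}) minus the annular contributions, up to the harmless $(1-|\un|^2)^2$ term estimated in Lemma \ref{en}. By (\ref{mM}) it is equivalent to bound $\int_{\Gamma_n^i(R)}\nabla\un\cdot A_{\delta_n}\nabla\un$.

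The core step is the following squeeze. On one hand, Proposition \ref{usuru} gives
$$\int_{\Gamma_n^i(R)}\nabla\un\cdot A_{\delta_n}\nabla\un \;\ge\; \int_{\Gamma_n^i(R)}\nabla v_n\cdot A_{\delta_n}\nabla v_n - C\,,$$
where $v_n=\un/|\un|\in\mathcal V_{\kappa_i(n)}(\lambda\e_n,R)$ after translating $a_i$ to the origin, so this last integral is $\ge \mu(A_{\delta_n},\lambda\e_n,R,\kappa_i(n)) \ge 2m\pi\,\kappa_i(n)^2\Log(R/(\lambda\e_n))$ by Lemma \ref{estmu0}. On the other hand, I would build a competitor for $\un$ on $B(a_i,R)$ by gluing: inside $B(a_i,\lambda\e_n)$ keep $\un$ itself, and on $\Gamma_n^i(R)$ use the approximate minimizer $\tilde v_n\in\mathcal V_{\kappa_i(n)}(\lambda\e_n,R)$ furnished by Theorem \ref{boundval} (for the field $B=A_{\delta_n}$, which has spectrum in $[m,M]$), rotated so that its inner trace on $|x-a_i|=\lambda\e_n$ matches $\un/|\un|$ up to a constant phase — the freedom of a multiplicative modulus-$1$ constant in $\mathcal V_\kappa$, together with the prescribed outer value $e^{i\kappa\theta}$, is exactly what makes the gluing legitimate while keeping the boundary condition $g$ on $\partial\Omega$ unchanged. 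Minimality of $\un$ then yields
$$E_{\e_n,\delta_n}(\un)\;\le\;E_{\e_n,\delta_n}(\text{competitor})\,,$$
and subtracting the part of the energy living outside $B(a_i,R)$ (identical for both) and using $\int_{\Gamma_n^i(R)}\nabla\tilde v_n\cdot A_{\delta_n}\nabla\tilde v_n \le \mu(A_{\delta_n},\lambda\e_n,R,\kappa_i(n))+\kappa_i(n)^2 C \le 2M\pi\,\kappa_i(n)^2\Log(R/(\lambda\e_n))+\kappa_i(n)^2 C$ gives an upper bound of the same logarithmic form. Comparing this upper bound with the global lower bound $m\pi d\Log(1/\e_n)-C$ of Lemma \ref{bornes}, and summing over $i$, forces $\sum_i\kappa_i(n)^2$ to stay bounded and pins each $\int_{\Gamma_n^i(R)}\nabla\un\cdot A_{\delta_n}\nabla\un$ between two quantities differing by $O(1)$; in particular it is bounded uniformly in $n$ (after passing to a further subsequence along which each $\kappa_i(n)$ is constant, which is possible since they are bounded integers).

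The main obstacle I anticipate is the bookkeeping of the gluing annuli and the dependence of constants on $R$ and on $\e_n$: one must check that the transition layers used in Theorem \ref{boundval} (which there live at scales comparable to $\alpha=\lambda\e_n$ and $\beta=R$) do not interfere with the bad disks $B(a_j,\lambda\e_n)$ for $j\neq i$, nor with $\partial\Omega$ — this is why the disks $B(a_i,2R_0)$ were chosen disjoint and interior. A second delicate point is that $R$ enters only through $\Log(R/(\lambda\e_n))=\Log(1/\e_n)+O_R(1)$, so the $R$-dependence is absorbed into an additive constant $C(R)$ and does \emph{not} spoil uniformity in $n$; one does however get a bound depending on $R$, which is exactly what the statement asks for. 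Everything else is routine: ellipticity (\ref{mM}) converts between $|\nabla\cdot|^2$ and $\nabla\cdot\, A_\delta\nabla\cdot$, and Lemma \ref{en} disposes of the potential term.
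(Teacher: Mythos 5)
Your overall skeleton matches the paper's (bad disks, bounded degrees, Proposition \ref{usuru} to replace $\un$ by $\un/|\un|$, a lower bound through $\mu$ and an upper bound through the approximate minimizers of Theorem \ref{boundval}), but there are two genuine gaps in the comparison step. First, your competitor is not well defined. Theorem \ref{boundval} produces a map whose traces on the two boundary circles are \emph{rotations of the standard map} $\theta\mapsto e^{i\kappa\theta}$, whereas the trace of $\un$ (or of $\un/|\un|$) on $|x-a_i|=\lambda\e_n$ and on $|x-a_i|=R$ is an arbitrary degree-$\kappa_i$ map, of modulus merely $\geq 1/2$; a constant phase cannot match them, so gluing $\tilde v_n$ to $\un$ kept inside the bad disk and outside $B(a_i,R)$ does not produce an $H^1_g(\Omega,\C)$ function. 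The paper never keeps $\un$ anywhere in the competitor: it fills the bad disk with the explicit map $e^{i\kappa_i\theta_n^i}\big((x-a_i)/(\lambda\e_n)\big)^{\kappa_i}$, whose trace on $|x-a_i|=\lambda\e_n$ is exactly the rotated standard map and whose energy (gradient plus potential) is $O(1)$, and it uses a \emph{fixed} $S^1$-valued map $w_0$ on $\Omega\backslash\bigl(\cup_i B(a_i,R)\bigr)$ with $w_0=g$ on $\partial\Omega$ and $w_0=\big((x-a_i)/|x-a_i|\big)^{\kappa_i}$ on $|x-a_i|=R$; the existence of such a $w_0$ rests on the degree identity $\sum_i\kappa_i=d$, which you never invoke.

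Second, and more seriously, your squeeze bounds the wrong quantity. If the competitor coincides with $\un$ outside $B(a_i,R)$ and inside $B(a_i,\lambda\e_n)$, minimality only yields $\int_{\Gamma_n^i(R)}\nabla\un\cdot A_{\delta_n}\nabla\un\leq \mu(B_n^i,\lambda\e_n,R,\kappa_i)+\kappa_i^2C$: the exterior terms cancel on both sides and you learn nothing about $\int_{\Omega\backslash(\cup_i B(a_i,R))}|\nabla\un|^2$, which is precisely what the theorem asks you to bound (and your assertion that the annular integral itself ``is bounded uniformly in $n$'' is false, since it diverges like $\Log(1/\e_n)$). Nor can the exterior bound be recovered from Lemma \ref{bornes} combined with Lemma \ref{estmu0}, as your opening paragraph suggests: the total-energy upper bound has leading term $M\pi d\,\Log(1/\e_n)$ while the summed annular lower bounds have leading term $m\pi\sum_i\kappa_i^2\,\Log(1/\e_n)$, and these do not cancel since $m<M$. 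The point of the paper's proof is that both halves of the squeeze are expressed through the \emph{same} uncomputed quantity $\sum_i\mu(B_n^i,\lambda\e_n,R,\kappa_i)$: the global competitor $W_n$ gives $E_{\e_n,\delta_n}(\un)\leq\sum_i\frac12\mu(B_n^i,\lambda\e_n,R,\kappa_i)+C_2$ with $C_2$ independent of $n$, while Proposition \ref{usuru} and the definition of $\mu$ give $\frac12\int_{\cup_i\Gamma_n^i(R)}\nabla\un\cdot A_{\delta_n}\nabla\un\geq\sum_i\frac12\mu(B_n^i,\lambda\e_n,R,\kappa_i)-C$, and subtraction leaves $\frac m2\int_{\Omega\backslash(\cup_i\Gamma_n^i(R))}|\nabla\un|^2=O(1)$. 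A smaller slip: to bound the degrees $\kappa_i(n)$ one compares the annular \emph{lower} bounds with the global \emph{upper} bound of Lemma \ref{bornes}, not with its lower bound as you wrote.
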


\bigskip\subsection{Proof of Theorem \ref{borneloc}}.\-

The sequence $\delta_n$ being given, we fix an associated sequence $\e_n$ provided by Lemma \ref{en}, so that Conclusions 1/ and 2/ of the lemma hold true.

Then, we replace $(\e_n,\delta_n)$ by a subsequence in order to get the conclusions of Proposition \ref{baddisks}. This provides the finite set $\{a_1,\cdots,a_N\}$ of singular points and a ratio $\lambda>0$ for the annulus  $\Gamma_n^i(R)$ of Proposition \ref{usuru}.

We claim that the sequence $(\un)$ is bounded in $H^1_{loc}(\Omega\backslash \{a_1,\cdots,a_N\}$\,.

\medskip We fix $R_0$ such that the disks $B(a_i,2R_0)$ are contained in $\Omega$ and do not intersect each other. For any $R\leq R_0$, $|\un|\geq 1/2$ on $\Gamma_n^i(R)$, so that $\un$ has a well defined degree $\kappa_i(n)$ on the annulus. The first claim is that $\kappa_i(n)$ cannot be too large.

\smallskip
\begin{lem}\label{majkappa} $\exists\,\kappa_0\in \N^*\,,\; |\kappa_i(n)|\leq \kappa_0\,,\; n\geq 1\,,\; i=1,\cdots,N\,.$
\end{lem}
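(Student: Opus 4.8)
The plan is to bound the degrees $\kappa_i(n)$ by comparing the energy of $\un$ on each annulus $\Gamma_n^i(R_0)$ with the lower bound for the energy of $S^1$-valued maps in the class $\mathcal V_{\kappa_i(n)}$, and then play this lower bound against the global energy estimate of Lemma~\ref{bornes}. First I would fix $R=R_0$ and recall from Proposition~\ref{baddisks} that $|\un|\geq 1/2$ on $\Gamma_n^i(R_0)$, so that $v_n^i:=\un/|\un|$ is a well-defined $H^1$ map into $S^1$ on this annulus, and by the remark preceding Notations~\ref{defdegkappa} it lies in $\mathcal V_{\kappa_i(n)}(\lambda\e_n,R_0)$ for the field $B(x)=A_{\delta_n}(x)$ (which has spectrum in $[m,M]$ uniformly in $n$, so all estimates of Section~4 apply with constants independent of $n$).

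The key chain of inequalities runs as follows. On one hand, by Proposition~\ref{usuru},
\begin{equation*}
\int_{\Gamma_n^i(R_0)}\nabla v_n^i\cdot A_{\delta_n}\nabla v_n^i \;\leq\; \int_{\Gamma_n^i(R_0)}\nabla \un\cdot A_{\delta_n}\nabla \un + C\;\leq\; 2E_{\e_n,\delta_n}(u_{\e_n,\delta_n}) + C,
\end{equation*}
and the disks $B(a_i,R_0)$ being disjoint, summing over $i$ and using Lemma~\ref{bornes} gives $\sum_i \int_{\Gamma_n^i(R_0)}\nabla v_n^i\cdot A_{\delta_n}\nabla v_n^i \leq 2M\pi d\log(1/\e_n)+C'$ for a constant $C'$ independent of $n$. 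On the other hand, since $v_n^i\in\mathcal V_{\kappa_i(n)}(\lambda\e_n,R_0)$, Lemma~\ref{estmu0} (applied with $B=A_{\delta_n}$) yields
\begin{equation*}
\int_{\Gamma_n^i(R_0)}\nabla v_n^i\cdot A_{\delta_n}\nabla v_n^i \;\geq\; \mu(A_{\delta_n},\lambda\e_n,R_0,\kappa_i(n))\;\geq\; 2m\pi\,\kappa_i(n)^2\,\log\!\Big(\frac{R_0}{\lambda\e_n}\Big).
\end{equation*}
Combining the two, and noting $\log(R_0/(\lambda\e_n)) = \log(1/\e_n) + O(1)$, we get for each fixed $i$
\begin{equation*}
2m\pi\,\kappa_i(n)^2\,\big(\log(1/\e_n)+O(1)\big)\;\leq\; 2M\pi d\,\log(1/\e_n)+C',
\end{equation*}
so that $\kappa_i(n)^2 \leq \frac{M}{m}d + o(1)$ as $n\to\infty$. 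Hence $\sup_n|\kappa_i(n)|$ is finite for each $i$, and taking $\kappa_0 = \big\lceil \sqrt{Md/m}\big\rceil + 1$ (or the finite maximum over the finitely many exceptional small $n$) gives the uniform bound claimed in the lemma.

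The only mild subtlety — and the main point requiring care rather than a genuine obstacle — is making sure all constants are genuinely independent of $n$: the constant $C$ in Proposition~\ref{usuru} is already uniform in $n$ (and in $R\leq R_0$) by that proposition's statement, the constant in Lemma~\ref{bornes} depends only on $\Omega$, $g$, $m$, $M$, and the constant in Lemma~\ref{estmu0} depends only on $m$; finally $\lambda$, $N$ and $R_0$ are fixed once and for all before $n$ varies. One should also note $\log(1/\e_n)\to+\infty$ since $\e_n\to 0$, which is what forces the $o(1)$ term to be negligible and makes the argument bite; for the finitely many $n$ (if any) with $\log(1/\e_n)$ not yet large, one simply enlarges $\kappa_0$ to dominate those finitely many values of $|\kappa_i(n)|$.
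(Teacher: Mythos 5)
Your proof is correct and follows essentially the same route as the paper: bound $\int_{\Gamma_n^i}\nabla(\un/|\un|)\cdot A_{\delta_n}\nabla(\un/|\un|)$ above by the global energy via Proposition \ref{usuru} and Lemma \ref{bornes}, below by $\mu(\cdot,\lambda\e_n,R,\kappa_i(n))\geq 2m\pi\kappa_i(n)^2\log(R/(\lambda\e_n))$ via Lemma \ref{estmu0}, and divide by $\log(1/\e_n)$ to get $\limsup_n\kappa_i(n)^2\leq Md/m$. Your treatment of the constants and of the finitely many small $n$ is if anything slightly more careful than the paper's.
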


\begin{proof}\-

By Inequalities (\ref{majmu}), (\ref{inequsuru}) and (\ref{estmu}), one has a constant $C$ such that
\begin{equation}\begin{split}
-M\pi d\, \Log\, \e_n +C &\geq E_{\e_n,\delta_n}(\un) \\
& \geq \int_{\Gamma_n^i(R)} \nabla \un\cdot A_\delta(x)\nabla \un \\
 & \geq \int_{\Gamma_n^i(R)} \frac{\nabla \un}{|\nabla \un}\cdot A_\delta(x) \frac{\nabla \un}{|\nabla \un|} -C \\
 &\geq \mu(A_\delta(x-a_i),\lambda\e_n,R,\kappa_i(n)) -C \\
 &\geq -m\pi\kappa_i(n)^2 \,Log \left(\frac{R}{\lambda \e_n}\right)-C\,.
 \end{split}\end{equation}

 Dividing by $-\Log\, \e_n$ and making $n\to \infty$ provides
 \begin{equation}
\limsup_{n\to\infty} \kappa_i(n)^2 \leq \frac{Md}{m}
\end{equation}
and the result.

\end{proof}

\smallskip We continue the proof of the theorem by contradiction. Suppose that the sequence $(\un)$ above is not $H^1$-locally bounded on $\Omega \backslash \{a_1,\cdots,a_N\}$. Then, there will be some $R\leq R_0$ and a subsequence -- still denoted $(\un)$\ -- such that
\begin{equation}\label{contrad}
\lim_{n\to\infty}\int_{\Omega\backslash (\,\cup_iB(a_i,R)\,)}|\nabla \un |^2 = +\infty\,.
\end{equation}

By Lemma \ref{majkappa}, substituting again a subsequence to the sequence $(\un)$, one can suppose that the degrees $\kappa_i(n)$ do not depend on $n$, i.e.
\begin{equation}\kappa_i(n)=\kappa_i\,,\;i=1,\cdots,N\,,\;n\geq 1\,.
\end{equation}
As $|\un|\geq 1/2$ on $\Omega\backslash (\,\cup_i B(a_i,R)\,)$, one has $\sum_{i=1}^N \kappa_i=d$. Hence, there will exist some  $w_0\in H^1(\Omega\backslash (\,\cup_iB(a_i,R)\,),\C)$ satisfying
\begin{equation}\begin{split} | w_0(x)|=1\,,\;&x \in \,\Omega\backslash (\,\cup_iB(a_i,R)\,)\,,\quad\\ w_0(x)=g(x)\,,\;x\in \partial \Omega\,,\quad w_0(x)&=\frac{(x-a_i)^{\kappa_i}}{| x-a_i|^{\kappa_i}}\,,\;i=1,\cdots,N\,,\;| x-a_i|=R\,.
\end{split}\end{equation}

With the help of Theorem \ref{boundval}, we extend this $w_0$ as a function in $H^1_g(\Omega,\C)$ in the following way:

Theorem \ref{boundval} provides a constant $C(m,M)$ and, for  given $i=1\cdots N$ and $n>0$,
a function $w_n^i\in H^1(\Gamma_n^i(R),S^1)$ with some constant $\theta_n^i$ such that
\begin{equation}\label{constwn}\begin{split} w_n^i(x)=\frac{(x-a_i)^{\kappa_i}}{|x-a_i|^{\kappa_i}}&\,\text{ if }|x-a_i|=R\,,\; \\w_n^ix) =  e^{i\kappa_i\theta_n^i} \frac{(x-a_i)^{\kappa_i}}{|x-a_i|^{\kappa_i}}&\,\text{ if }|x-a_i|=\lambda \e_n\,, \\
\int_{\Gamma_n^i(R)}\
\nabla w_n^i(x)\cdot A_\delta(x)\nabla w_n^i(x)dx&\leq \mu(B_n^i,\lambda\e_n,R,\kappa_i)+\kappa_i^2C(m,M)\,,
\end{split}\end{equation}
where $B_n^i$ is the field $\displaystyle x\to A\left(\frac{x-a_i}{\delta_n}\right)$. [We recall that $\Gamma_n^i(R)$ is the annulus $\lambda \e_n \leq |x-a_i|\leq R$\,.]

Define $W_n\in  H^1_g(\Omega,\C)$ as follows\,:
\begin{equation}W_n(x)=\left\{
\begin{matrix} w_0(x) &\text{ if } x\in \Omega\backslash (\,\cup_iB(a_i,R)\,)\,; \\
w_n^i(x) &\text{if } x\in \Gamma_n^i(R)\,,\;i=1,\cdots,N\,;\\
e^{i\kappa_i\theta_n^i}\displaystyle\frac{(x-a_i)^{\kappa^i}}{(\lambda\e_n)^{\kappa^i}} &\text{if }x\in B(a_i,\lambda \e_n)\,,\; i=1,\cdots,N\,.
\end{matrix}\right.\end{equation}

\medskip We compute
\begin{equation}\label{estWn}\begin{split}
E_{\e_n,\delta_n}(W_n) & =\undemi \int_{\Omega \backslash(\,  \cup_i B(a_i,R) \,)} \nabla w_0(x)\cdot A_\delta(x) \nabla w _0(x)\,dx\\
&+\sum_{i=1}^N \undemi\int_{\Gamma_n^i(R)} \nabla w_n^i(x).A_\delta(x)\nabla w_n^i(x) \,dx\\
&+\sum_{i=1}^N\ \undemi\int_{B(a_i,\lambda\e_n)}\nabla W_n(x).A_\delta(x)\nabla W_n(x)\,dx \\
&+ \sum_{i=1}^N
\frac{1}{4\e_n^2}\int_{B(a_i,\lambda\e_n)}\big( 1-|W_n(x)|^2\big)^2\,dx\,.
\end{split}\end{equation}

In the right hand side, the first line is
\begin{equation}\label{estW1}
\undemi \int_{\Omega \backslash(\,  \cup_i B(a_i,R) \,)} \nabla w_0\cdot A_\delta \nabla w _0 \leq \frac{M}{2} \int_{\Omega \backslash(\,  \cup_i B(a_i,R) \,)} \big| \nabla w_0\big|^2 =MC_0
\end{equation}
with $C_0$ not depending on $n$.

We have then, by (\ref{constwn})\,:
\begin{equation}\label{estW2}
\sum_{i=1}^N \int_{\Gamma_n^i(R)} \nabla w_n^i(x).A_\delta(x)\nabla w_n^i(x)  \leq \sum_{i=1}^N \mu(B_n^i,\lambda\e_n,R,\kappa_i) + N \kappa_0^2 C(m,M)\,.
\end{equation}

We continue with
\begin{equation}\label{estW3}\begin{split}
\sum_{i=1}^N\ \undemi\int_{B(a_i,\lambda\e_n)}&\nabla W_n(x).A_\delta(x)\nabla W_n(x) \\
& \leq \frac{M}{2} \sum_{i=1}^N \int_{B(0,\lambda \e_n)} \big| \nabla \big( \frac{re^{i\theta}}{\lambda\e_n}\big)^{\kappa_i} \big|^2 rdrd\theta\leq \frac{MN\kappa_0^2}{2}C_1
\end{split}\end{equation}
with $C_1$ not depending on $n$.

And finally, since $|W_n|\leq 1$\,:
\begin{equation}\label{estW4}
\sum_{i=1}^N
\frac{1}{4\e_n^2}\int_{B(a_i,\lambda\e_n)}\big( 1-|W_n(x)|^2\big)^2 \leq \frac{N\lambda^2 \pi}{4}\,.
\end{equation}

\smallskip Summing up in (\ref{estWn}), and using the fact that $\un$ is a minimizer for $E_{\e_n,\delta_n}$, we get
\begin{equation}\label{estuglobal}\begin{split}
E_{\e_n,\delta_n}(\un) &\leq E_{\e_n,\delta_n}(W_n)
\leq \sum_{i=1}^N \undemi \mu(B_n^i,\lambda\e_n,R,\kappa_i) +C_2
\end{split}\end{equation}
with $C_2$ not depending on $n$.

\medskip On the other hand, as the restriction of $\displaystyle \frac{\un}{|\un|}
$ to each annulus $\Gamma_n^i(R)$, translated by $a_i$, lies in $\mathcal V_{\kappa_i}(\lambda\e_n,R)$, the mere definition of $\mu(B_n^i,\lambda\e_n,R,\kappa_i)$ in Notations \ref{defdegkappa} implies, for $n\geq 1$ and $i=1,\cdots,N$
\begin{equation}
\int_{\Gamma_n^i(R)} \nabla \frac{\un}{|\un|}\cdot A_\delta(x) \nabla \frac{\un}{|\un|}\geq \mu(B_n^i,\lambda\e_n,R,\kappa_i)\,.
\end{equation}
Proposition \ref{usuru} provides then a constant $C$ such that
\begin{equation}\label{estunann}
\int_{\cup_i \Gamma_n^i(R)}\nabla \un\cdot A_\delta(x)\nabla \un \geq \sum_{i=1}^N \mu(B_n^i,\lambda\e_n,R,\kappa_i)-NC\,.
\end{equation}

Comparing (\ref{estuglobal}) and (\ref{estunann}) provides at last
\begin{equation}\begin{split}
\frac{m}{2} \int_{\Omega\backslash (\,\cup_i \Gamma_n^i(R)\,)} & |\nabla \un|^2  \leq \undemi \int_{\Omega\backslash (\,\cup_i \Gamma_n^i(R)\,)}\nabla u_n\cdot A_\delta \nabla u_n \\
&\leq E_{\e_n,\delta_n}(\un)\,-\, \undemi \int_{\cup_i \Gamma_n^i(R)}\nabla \un\cdot A_\delta(x)\nabla \un \\
&\leq \undemi(C_2+NC)
\end{split}\end{equation}
which contradicts (\ref{contrad}). The theorem is proved.

\hfill $\square$

\vskip1.5cm
\section {The homogenized equation for $u_\infty$.}

\bigskip \subsection {\label{71} The mean homogenized matrix $A^0$ (cf. [SP], [Ba], [BLP])}\-

$Y$ is the cell $[0,1[\times [0,1[$ in $\R^2$.

We start with the obvious following remark, which shall be of constant use\,:
\begin{equation}\label{TAinv}\begin{split}
\forall\,f\in L^2(Y)\,,\; \exists\,!\, g\in H^1_{per}(\overline Y)\,\text{ s.t. } \\
div\, A(y)\nabla g(y)=f(y) \,\text{ and }\, \displaystyle \int_Yg(y)dy=0\,.
\end{split}\end{equation}

Accordingly, one defines the vector field $\widehat \chi(y))=\big(\widehat \chi^j(y)\big)_{j=1,2}$ on $Y$ as the (unique) solution of the system of equations
\begin{equation}\label{eqchi}
div\,A(y)\nabla\widehat \chi^j(y)=\sum_i \frac{\partial A_{i,j}(y)}{\partial y_i}\,,\; \widehat \chi^j \in H^1_{per}(\overline Y)\,,\;\int_Y \widehat \chi^j(y)=0\,.
\end{equation}

The mean homogenized matrix $A^0$ is a matrix with constant entries, defined by
\begin{equation}
A^0_{ij} =\int_Y A_{ij}(y)dy -\int_Y \sum_k A_{ik}(y)\frac{\partial \widehat \chi^j(y)}{\partial y_k}dy\,.
\end{equation}

\subsection{Statement of the result} The general theorem about nonlinear Ginzburg-Landau type equations can be stated as follows\,:
\smallskip

\begin{thm}
\label{thmhomo} Fix $\Omega_0$ a bounded domain in $\R^2$.

Let $(\delta_n)$ be a sequence tending to $0$ and  $(u_n)$ a sequence in $ H^1(\overline \Omega_0,\C)$ satisfying the following assumptions\,:
\begin{itemize}
\item [i.] $|u_n|\leq 1$ and $\displaystyle \lim_{n\to \infty} \int_{\Omega_0}\big(\,1-|u_n| \,\big)^2=0$\,.
\item[ii.] $\sup_n ||\nabla u_n||_2 <+\infty$\,.
\item[iii.] $\displaystyle -div\,\Big(A\big(\frac{x}{\delta_n}\big)\nabla u_n\Big)(x) =u_n(x)\,f_n(x,u_n)$ for some real valued function $f_n$ on $\Omega_0\times \C$, depending on $n$.
\end{itemize}

Then, any weak $H^1$-limit $u_\infty$ of the $u_n$ is a $A^0$-harmonic function in $H^1(\Omega_0,S^1)$, i.e. a weak solution of the equation
\begin{equation}\label{equn}
-div\,(\,A^0\,\nabla u_\infty \,\big)= u_\infty\,\big( \nabla u_\infty\cdot A^0\, \nabla u_\infty\big)\,.
\end{equation}
\end{thm}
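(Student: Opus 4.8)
The plan is to apply the periodic unfolding operator $T_{\delta_n}$ to both sides of the equation in assumption (iii), pass to the limit using the known behaviour of $T_{\delta_n}$ under weak $H^1$-convergence (as recalled in [CDG2]), and read off the homogenized equation. First I would record the basic structural facts needed: since $|u_n| \le 1$ and $\int_{\Omega_0}(1-|u_n|)^2 \to 0$, combined with $\sup_n \|\nabla u_n\|_2 < \infty$, any weak $H^1$-limit $u_\infty$ satisfies $|u_\infty| = 1$ a.e., so $u_\infty \in H^1(\Omega_0, S^1)$; moreover $\nabla u_\infty \cdot u_\infty = 0$ a.e., which lets one rewrite the scalar factor $f_n(x,u_n)$ in the limit. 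Because $u_n \to u_\infty$ weakly in $H^1$ and strongly in $L^2$ (Rellich), the right-hand side $u_n f_n(x,u_n)$ will have to be controlled; note that from (iii) and the bound $\sup_n\|\nabla u_n\|_2<\infty$ one gets $\sup_n \|\,\mathrm{div}(A(x/\delta_n)\nabla u_n)\,\|_{H^{-1}} < \infty$, which is the correct a priori bound to make the passage to the limit meaningful in the distributional sense.

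The core step is the unfolding argument. I would test (iii) against $\varphi \in C_c^\infty(\Omega_0, \C)$, obtaining
\begin{equation*}
\int_{\Omega_0} A\big(\tfrac{x}{\delta_n}\big)\nabla u_n \cdot \nabla \varphi \, dx = \int_{\Omega_0} u_n\, f_n(x,u_n)\, \varphi\, dx + (\text{boundary terms}),
\end{equation*}
apply $T_{\delta_n}$, and use the fundamental fact that if $u_n \rightharpoonup u_\infty$ weakly in $H^1(\Omega_0)$ then $T_{\delta_n}(\nabla u_n) \rightharpoonup \nabla u_\infty + \nabla_y \hat u$ weakly in $L^2(\Omega_0 \times Y)$ for some corrector $\hat u \in L^2(\Omega_0; H^1_{per}(\overline Y))$. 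Here is where the field $\widehat\chi$ of \eqref{eqchi} enters: the corrector must be $\hat u(x,y) = \sum_j \widehat\chi^j(y)\,\partial_{x_j} u_\infty(x)$, exactly as in the linear theory, because the limiting cell equation is $\mathrm{div}_y\big(A(y)(\nabla_x u_\infty + \nabla_y \hat u)\big) = 0$. Substituting this corrector and integrating over $Y$, the quadratic form $\int_Y A(y)(\nabla u_\infty + \nabla_y \hat u)\,dy$ collapses precisely to $A^0 \nabla u_\infty$ by the definition of $A^0$ in subsection \ref{71}; thus the left-hand side converges to $\int_{\Omega_0} A^0 \nabla u_\infty \cdot \nabla \varphi$.

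For the right-hand side, I would argue that although $f_n$ is unknown, its only role is through the identity $-\mathrm{div}(A(x/\delta_n)\nabla u_n) = u_n f_n(x,u_n)$, so the limit of $u_n f_n(x,u_n)$ in $\mathcal D'$ is forced: it must equal $-\mathrm{div}(A^0 \nabla u_\infty)$, which we have just shown is a genuine $H^{-1}$ object. To identify it with $u_\infty(\nabla u_\infty \cdot A^0 \nabla u_\infty)$, I would use the constraint $|u_\infty| = 1$: differentiating $|u_\infty|^2 = 1$ gives $u_\infty \cdot \partial_j u_\infty = 0$, hence $u_\infty \cdot \mathrm{div}(A^0\nabla u_\infty) = -\,\nabla u_\infty \cdot A^0 \nabla u_\infty$ pointwise (formally, then justified by the $H^2_{loc}$-type regularity that ellipticity of $A^0$ and the right-hand side confer on $u_\infty$, as in [BBH]); since $\mathrm{div}(A^0\nabla u_\infty)$ is parallel to $u_\infty$ (being a real multiple of it, because $f_n$ is real-valued so the limit stays real-proportional to $u_\infty$), it equals $-u_\infty(\nabla u_\infty \cdot A^0 \nabla u_\infty)$, giving \eqref{equn}. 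The main obstacle I anticipate is the justification that the limiting right-hand side remains a real multiple of $u_\infty$ without any uniform control on $f_n$ itself — this requires using the strong $L^2$-convergence of $u_n$ together with the orthogonality $u_n \cdot \nabla u_n \to u_\infty \cdot \nabla u_\infty$ carefully, and it is the one place where the bound $\int(1-|u_n|)^2 \to 0$ is doing essential work rather than being a convenience.
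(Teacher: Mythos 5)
Your overall framework (unfold, identify the corrector via $\widehat\chi$, recognize $A^0$, then use $|u_\infty|=1$ to compute the Lagrange multiplier) matches the paper's, but there is a genuine gap at the central step, and it is precisely the one the paper's proof is built around. You derive the cell equation $\mathrm{div}_y\big(A(y)(\nabla_x u_\infty+\nabla_y\widehat u)\big)=0$ and hence the formula $\widehat u=\sum_j\widehat\chi^j\,\partial_{x_j}u_\infty$ as if this were the linear problem with a controlled right-hand side. It is not: the only a priori information on $u_nf_n(x,u_n)$ is the $H^{-1}$ bound you mention, and that bound is exactly \emph{not} enough to make the right-hand side drop out of the cell problem. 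Unfolding the equation gives $-\mathrm{div}_y\big(A(y)T_{\delta_n}(\nabla u_n)\big)=\delta_n\,T_{\delta_n}(u_nf_n)$, and pairing the right-hand side with an oscillating test function $\varphi(x)\psi(x/\delta_n)$ costs a factor $\delta_n^{-1}$ in the $H^1$ norm, so the pairing is $O(1)$, not $o(1)$. The same unknown term blocks your final identification of the limit of $u_nf_n$ as a real multiple of $u_\infty$; you flag this second difficulty yourself, but ``the limit stays real-proportional to $u_\infty$'' does not follow from strong $L^2$ convergence of $u_n$ alone, since $f_n$ has no compactness whatsoever.

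The paper removes both obstructions with one move that your proposal lacks: before unfolding, it takes the wedge (cross) product of the $n$-th equation with $u_n$ itself. Because $f_n$ is real-valued, $u_nf_n\wedge u_n\equiv 0$, so the equation $-\mathrm{div}\big(A(x/\delta_n)\nabla u_n\big)\wedge u_n=0$ has identically vanishing right-hand side for every $n$; the unfolding limit then yields a clean cell equation, but only for the combination $\widehat u\wedge u_\infty$ (note that $\widehat u$ itself is \emph{not} determined --- only its wedge with $u_\infty$ is, via $\widehat u\wedge u_\infty=-\widehat\chi\cdot\nabla u_\infty\wedge u_\infty$, and this suffices). Passing to the limit in the weak formulation against $T_{\delta_n}(f)$ then gives $\big(\mathrm{div}\,A^0\nabla u_\infty\big)\wedge u_\infty=0$, which is exactly the statement that $-\mathrm{div}(A^0\nabla u_\infty)=u_\infty f$ for some real distribution $f$; dotting with $u_\infty$ and using $|u_\infty|=1$ gives $f=\nabla u_\infty\cdot A^0\nabla u_\infty$. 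To repair your argument, perform this wedge at the level of the $n$-th equation rather than attempting to control $f_n$ in the limit.
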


\medskip
\subsection{Proof of Theorem \ref{thmhomo}}\-

The fact that the limit equation for $u_\infty$ should be driven by the matrix $A^0$ is quite expected, but we can provide here a very quick and simple proof, based on the periodic unfolding method (cf. [CDG]).

 The unfolding operator $T_\delta$ ($\delta>0$) is described as follows\,: for $f\in L^2(\Omega_0)$, $T_\delta f$ is the function on $\Omega_0\times Y$ defined by
 \begin{equation}
T_\delta f(x,y)=\left\{ \begin{matrix} f\left(\delta \left[ \displaystyle\frac{x}{\delta}\right] +\delta y\right) &\text{ if } \delta \left[\displaystyle \frac{x}{\delta}\right] +\delta Y\subset \Omega_0\,, \\ \\0 &\text{ otherwise}\,,
\end{matrix} \right.
\end{equation}
where $\left[\displaystyle \frac{x}{\delta}\right]\in Z^2 $ is the integer part of $\displaystyle \frac{x}{\delta}$, i.e. the only $z\in Z^2$ such that $\displaystyle \frac{x}{\delta}-z\in Y$.

One has $T_\delta (\nabla f)(x,y)=\delta \nabla_y T_\delta f(x,y)$ and
\begin{equation}\label{eclatTA}
T_\delta\big(div\,A(\frac{x}{\delta})\nabla f\big)(x,y)= \delta \,div_y\,\big(A(y) T_\delta(\nabla f) \big)(x,y) \,.
\end{equation}

\medskip Let us write equation (\ref{equn}) as
\begin{equation}
-div\,A\big(\frac{x}{\delta_n}\big)\nabla u_n\wedge u_n=0\,,
\end{equation}
and apply the operator $T_{\delta_n}$ in order to get
\begin{equation}\label{eqwedge}
-div_y A(y) T_{\delta_n}(\nabla u_n)(x,y) \wedge T_{\delta_n}(u_n)(x,y)=0\,.
\end{equation}
When $n\to \infty$, then, by the results of [CDG, Prop. 2.9 and Thm. 3.5], $T_{\delta_n}(u_n)(x,y)\to u_\infty(x)$ strongly in $L^2(\Omega\times Y)$, while $T_{\delta_n}(\nabla u_n)(x,y)\to \nabla u_\infty (x)+\nabla_y\widehat u(x,y)$ $L^2$-weakly, for some $\widehat u(x,y)\in L^2(\Omega,H^1_{per}(\overline Y))$ with vanishing mean \,: $\displaystyle \int_Y \widehat u(x,y)dy=0$, $x$ a.s..

So, passing to the weak limit, (\ref{eqwedge}) provides, in $L^2(\Omega_0,H^{-1}(\overline Y))$,
\begin{equation}
-div_y A(y)\big(\nabla_x u_\infty(x)+\nabla_y\widehat u(x,y)\big)\wedge u_\infty(x)=0\,
\end{equation}
or equivalently
\begin{equation}\label{equhat0}
div_y\Big(A(y)\nabla_y \big(\widehat u(x,y)\wedge u_\infty(x)\big)\Big) =-\sum_{ij} \frac{\partial A_{ij}(y)}{\partial y_i} \frac{\partial u_\infty (x)}{\partial x_j}\wedge u_\infty(x)\,.
\end{equation}
Invoking, for fixed $x$, linearity and uniqueness in (\ref{TAinv}), we get from (\ref{eqchi}) compared with (\ref{equhat0}), since $\widehat u$ is $Y$-periodic with vanishing mean\,:
\begin{equation}\label{equhat}
\widehat u(x,y)\wedge u_\infty(x)=-\widehat \chi(y)\cdot \nabla u_\infty(x)\wedge u_\infty(x)\,.
\end{equation}

\medskip Now, it suffices to pair equation (\ref{eqwedge}) with any $T_{\delta_n}(f)$, $f\in C^\infty_c(\Omega_0)$ (i.e. of class $C^\infty$ with compact support) to get, firstly when $\delta_n$ is small enough (i.e. such that $\delta_n \left[\displaystyle \frac{x}{\delta_n}\right] +\delta_n Y\subset \Omega_0$ for any $x\in\,supp\,f$
), then passing to the limit
\begin{equation}\begin{split}
0&=  \int_{\Omega_0\times Y}-div_y \,\Big(A(y) T_{\delta_n}(\nabla u_n)(x,y) \wedge T_{\delta_n}(u_n) (x,y)\Big)\, \frac{1}{\delta_n}T_{\delta_n} f(x,y)dxdy \\
 &=\int_{\Omega_0\times Y} A(y) T_{\delta_n}(\nabla u_n)(x,y) \wedge T_{\delta_n}(u_n)(x,y)\,\cdot\, \frac{1}{\delta_n}\nabla_yT_{\delta_n}  f(x,y)dxdy \\
 &=\int_{\Omega_0\times Y} A(y) T_{\delta_n}(\nabla u_n)(x,y) \wedge T_{\delta_n}(u_n)(x,y)\,\cdot\, T_{\delta_n} (\nabla_x f)(x,y))dxdy \\
 &=\int_{\Omega_0\times Y}A(y)\big(\nabla_xu_\infty(x)+\nabla_y \widehat u(x,y)\big)\wedge u_\infty(x)\, \cdot\,\nabla_xf(x)dxdy\,,
\end{split}\end{equation}
i.e.
\begin{equation}
-div_x \int_YA(y) \big(\nabla _xu_\infty(x)+\nabla_y\widehat u(x,y)\big)\wedge u_\infty(x)dy=0\,,\; x \text{ a.s.}
\end{equation}
Introducing (\ref{equhat}), we have then
\begin{equation}
0=-div\, A^0 \Big(\nabla u_\infty(x)\wedge u_\infty(x)\Big) \end{equation}
which can be written
\begin{equation}\label{uinfini1}
0=  -\Big(div\,A^0 \nabla u_\infty(x)\Big)\wedge u_\infty(x)
\end{equation}
in $H^{-1}(\Omega_0)$, with $A^0$ defined in subsection \ref{71}.

\smallskip Finally, we notice that (\ref{uinfini1}) means
\begin{equation}
-div\,\big(A^0 \nabla u_\infty\big)(x)=u_\infty(x)\,f(x)
\end{equation}
for some real valued distribution $f$. As $|u_\infty|=1$ (by Assumption i.), $f$ is given by
\begin{equation}
f(x)=\Big(-div\,A^0\nabla u_\infty(x)\Big)\cdot u_\infty(x)= \nabla u_\infty(x)\cdot A^0 \nabla u_\infty(x)\,,
\end{equation}
and the theorem is proved.

\hfill $\square$

\vskip1cm

\normalsize \begin{center} \bf REFERENCES\end{center}
\medskip

\begin{enumerate}

\bibitem [Al1]{Al1} G. Allaire, Homogenization and two-scale convergence, {\it SIAM J. Math. Anal.} {\bf 23}, 1482-1518. \smallskip

\bibitem[Al2]{Al2} G. Allaire, Two-scale convergence\,: a new method in periodic homogenization, {Nonlinear Partial Diff. Eq. and their Applications, Coll\`ege de France Seminar} vol. XII, 1994, 1-14.\smallskip

\bibitem[Ba]{Ba} N.S. Bakhvalov, Averaged characteristics of bodies with periodic structure, {\it Dokl. Akad. Nauk. SSSR} {\bf 218}, 1046-1048\,; English translation {\it Phys. Dokl.} {\bf 19}, 1974-1975. \smallskip

\bibitem[BH1] {BH1} A. Beaulieu, R. Hadiji, \newblock  Asymptotics for minimizers of a class of Ginzburg-Landau with weight, {\it C.R. Acad. Sc. Paris}, S\'er. I,
\newblock {\bf 320 } (1995), 181-186.\smallskip

\bibitem[BH2] {BH2} A. Beaulieu, R. Hadiji, \newblock  A Ginzburg-Landau problem with weight having minima on the boundary,  {\it Proc. Ryal Soc. Edinburgh},
\newblock {\bf 128 A } (1995), 1181-1215.\smallskip

\bibitem[BLP]{BLP} A. Bensoussan, J.-L. Lions and G. Papanicolaou, {\it Asymptotic Analysis for Periodic Structures}, North Holland, Amsterdam, 1978. \smallskip

\bibitem [BCG] {BCG} L. Berlyand, D. Cioranescu, D. Golovaty, Homogenization of a Ginzburg-Landau functional, {\it C. R. Acad. Sc. Paris} S\'er. I, {\bf 340}, 2005, 87-92.

\bibitem [BK] {BK} L. Berlyand, E. Khruslov, Homogenization of Harmonic maps and Superconducting Composites, {\it SIAM J. Appl. Math.}, {\bf 59}, n$^0$ 5, 1892-1916.

\bibitem[BM]{BM} L. Berlyand, P. Mironescu, Two-parameter homogenization for a Ginzburg-Landau problem in perforated domain, {\it Networks and Heterogenous Media} {\bf 3} n$^0$3 (2008), 461-487.

\bibitem[BBH] {BBH} F.B\'ethuel, H.Brezis and F.H\'elein, \newblock {\it
Ginzburg-Landau vortices}, Birk\-h\"auser, 1994.\smallskip

\bibitem[CDG] {CDG} D. Cioranescu, A. Damlamian, G. Griso,
Periodic Unfolding Method in Homogenization, Preprint 2007. \smallskip

\bibitem[Me]{Me} A. Messaoudi, \newblock {\it Homog\'en\'eisation des \'equations de Ginzburg-Lan\-dau},
\newblock Th\`ese de doctorat, Universit\'e Pierre et Marie Curie, Paris, 12/12/2005. \smallskip

\bibitem[Ng]{Ng} G. Nguetseng, A general convergence result for a functional related to the theory of homogenization, {\it SIAM J. Math. Anal.} {\bf 20}, 608-629.\smallskip

\bibitem[SP] {SP} E. Sanchez-Palencia, \'Equations aux d\'eriv\'ees partielles dans un type de milieu h\'et\'erog\`ene, {\it C. R. Acad. Sc.} S\'r. I,  {\bf 272}, 1410-1411. \smallskip

\bibitem[St1] {St1} M. Struwe, \newblock  Une estimation asymptotique pour le mod\`ele de Ginzburg-Landau, {\it C.R. Acad. Sc. Paris}, S\'er. I,
\newblock {\bf 317 } (1993), 677-680.\smallskip

\bibitem[St1] {St1} M. Struwe, \newblock  On the Asymptotic Behavior of Minimizers of the Ginzburg-Landau Model in $2$ Dimension, {\it Diff. Int. Eq.},
\newblock vol  {\bf 7 } n$^0$6 (1994), 1613-1324.\smallskip

\end{enumerate}

\end{document}